\renewcommand{\paragraph}[1]{\vspace{6pt} \noindent \textbf{#1}\xspace}
\numberwithin{equation}{section}
\newtheorem{theorem}{Theorem}[section]
\newtheorem{corollary}[theorem]{Corollary}
\newtheorem{lemma}[theorem]{Lemma}
\newtheorem{proposition}[theorem]{Proposition}
\newtheorem{claim}[theorem]{Claim}
\theoremstyle{definition}
\newtheorem{remark}[theorem]{Remark}
\newtheorem{definition}[theorem]{Definition}
\newtheorem{question}[theorem]{Open Question}
\DeclareMathOperator\Gr{Gr}
\newcommand{\GL}{\mathrm{GL}}
\newcommand{\F}{\mathbb{F}}
\newcommand{\bC}{\mathbb{C}}
\newcommand{\R}{\mathbb{R}}
\newcommand{\N}{\mathbb{N}}
\DeclareMathOperator{\trace}{Tr}
\newcommand{\rk}{\mathrm{rank}}
\DeclareMathOperator{\supp}{supp}
\newcommand{\bnorm}[1]{\left\lVert#1\right\rVert}
\newcommand{\norm}[1]{\lVert#1\rVert}
\newcommand{\ab}[1]{\left\lvert#1\right\rvert}
\newcommand{\M}{\mathrm{M}}
\newcommand{\U}{\mathrm{U}}
\newcommand{\tuple}[1]{\mathbf{#1}}
\newcommand{\V}{\mathcal{V}}
\newcommand{\vB}{\tuple{B}}
\newcommand{\vU}{\tuple{U}}
\newcommand{\colspan}{\operatorname{colspan}}
\newcommand{\E}{\mathrm{E}}
 \newcommand{\ann}{V^{\perp}}
\DeclareMathOperator{\rank}{rank}
\newcommand{\too}%
{\xrightarrow{\text{\raisebox{-3pt}{$\sim$}}\,}}
\newcommand\smat[1]{\left[\begin{smallmatrix}#1\end{smallmatrix}\right]}
\newlist{thmenum}{enumerate}{1}
\setlist[thmenum]{label=\rm(\arabic*), ref=\thetheorem(\arabic*)}
\newlist{propenum}{enumerate}{1}
\setlist[propenum]{label=\rm(\arabic*), ref=\theproposition(\arabic*)}
\crefname{figure}{Figure}{Figures}
\title{
On linear-algebraic notions of expansion
}
\author{
Yinan Li\thanks{
	School of Mathematics and Statistics and Hubei Computational Science Key Laboratory, Wuhan University, Wuhan 430072, China
	({\tt 
Yinan.Li@whu.edu.cn}). Part of this work is done when Yinan was a designated assistant professor at Nagoya University and supported by MEXT Quantum Leap Flagship Program (MEXT Q-LEAP) Grant Number JPMXS0120319794.}
\and
Youming Qiao\thanks{Centre for Quantum Software and Information, 
 University of Technology Sydney, Australia ({\tt Youming.Qiao@uts.edu.au}). Research supported in part by Australian Research Council DP200100950.}
\and
Avi Wigderson\thanks{School of Mathematics, Institute for Advanced Study, 
Princeton, New Jersey 08540 ({\tt avi@ias.edu}). Research supported in part by NSF grant CCF-1900460.
}
\and
Yuval Wigderson\thanks{Department of Mathematics, Tel Aviv University, Israel ({\tt yuvalwig@tauex.tau.ac.il}). Research supported in part by NSF GRFP Grant DGE-1656518, ERC Consolidator Grant 863438, ERC Starting Grant 101044123 and NSF-BSF Grant 20196.}
\and
Chuanqi Zhang\thanks{Centre for Quantum Software and Information, 
 University of Technology Sydney, Australia ({\tt 
Chuanqi.Zhang@student.uts.edu.au}). Research supported by Australian Research Council DP200100950 and the Sydney Quantum Academy, Sydney, NSW, Australia.}
}
\date{\today}
\begin{document}

\maketitle
\begin{abstract}
A fundamental fact about bounded-degree graph expanders is that three notions of expansion---vertex expansion, edge expansion, and spectral expansion---are all equivalent. In this paper, we study to what extent such a statement is true for linear-algebraic notions of expansion.

There are two well-studied notions of linear-algebraic expansion, namely dimension expansion (defined in analogy to graph vertex expansion) and quantum expansion (defined in analogy to graph spectral expansion). 
Lubotzky and Zelmanov proved that the latter implies the former. We prove that the converse is false: there are dimension expanders which are not quantum expanders.

Moreover, this asymmetry is explained by the fact that there are two \emph{distinct} linear-algebraic analogues of graph edge expansion. The first of these is \emph{quantum edge expansion}, which was introduced by Hastings, and which he proved to be equivalent to quantum expansion. We introduce a new notion, termed \emph{dimension edge expansion}, which we prove is equivalent to dimension expansion and which is implied by quantum edge expansion. Thus, the separation above is implied by a finer one: dimension edge expansion is strictly weaker than quantum edge expansion. This new notion also leads to a new, more modular proof of the Lubotzky--Zelmanov result that quantum expanders are dimension expanders.

\vskip 1em 
{\centering \it Yinan Li would like to dedicate this paper to the memory of Keding Ma, a beloved grandfather who passed away at 88.}
\end{abstract}

\section{Introduction}
\subsection{Graph-theoretic and linear-algebraic notions of expansion}\label{subsec:overview}
Expansion is a fundamental graph-theoretic notion, with applications in and connections to combinatorics, geometry, group theory, number theory, probability, theoretical computer science, and many other fields. For an in-depth introduction to expanders and their applications, we refer the reader to the monograph \cite{HLW06}.

One of the reasons why expanders are so ubiquitous is that there are three different notions of expansion in graphs, which are all equivalent. These equivalences naturally yield connections between different perspectives on expansion, and allow expanders to be utilized and studied in many different contexts. We briefly recall the three notions of expansion.

Let $G=([n],E)$ be a $d$-regular graph. The \emph{edge expansion} of $G$, $h(G)$, is defined as\footnote{Some authors define edge expansion without the factor of $d$, but we use this normalization to match the definition of quantum edge expansion introduced by~\cite{PhysRevA.76.032315}.}
\begin{equation}\label{eq: graph edge expansion}
h(G)\coloneqq \min_{\substack{W \subseteq [n]\\1\leq|W|\leq\frac{n}{2}}}\frac{|\partial W|}{d|W|},
\end{equation}
where $\partial W\coloneqq \{\{i,j\}\in E:~i\in W, j\in [n]\setminus W\}$. The \emph{vertex expansion} of $G$, $\mu(G)$, is defined as
\begin{equation}\label{eq: graph vertex expansion}
\mu(G)\coloneqq \min_{\substack{W \subseteq [n]\\1\leq|W|\leq\frac{n}{2}}}\frac{|\partial_{out}(W)|}{|W|},
\end{equation}
where $\partial_{out}(W)\coloneqq \{j\in [n]\setminus W:~\exists\ i\in W, ~\text{s.t.}~\{i,j\}\in E\}$.
The \emph{spectral expansion}\footnote{This notion is also sometimes called the \emph{spectral gap} of $G$.} of $G$, $\lambda(G)$, is defined as
\begin{equation}
    \lambda(G) \coloneqq \text{the second-smallest eigenvalue of }L\label{eq:graph spectral expansion}
\end{equation}
where $L$ is the normalized Laplacian matrix of $G$, which is the matrix with
$L_{i,i}=1, L_{i,j}=-1/d$ if $\{i,j\}\in E$ and $L_{i,j}=0$ otherwise. 
Equivalently, $L=I_n-A$, where $I_n$ is the $n\times n$ identity matrix and $A$ is the normalized adjacency matrix of $G$, defined by
$A_{i,j}=1/d$ if $\{i,j\}\in E$ and $A_{i,j}=0$ otherwise. 

Note that for any $d$-regular graph $G$, all three quantities $h(G), \mu(G), \lambda(G)$ are non-negative. We say that a sequence of $d$-regular graphs $(G_n)_{n \in \N}$ is an \emph{edge expander} (resp.\ \emph{vertex expander}, \emph{spectral expander}) if the relevant parameter is \emph{uniformly} bounded away from zero for the whole family, namely if $\inf_n h(G_n)>0$ (resp.\ $\inf_n \mu(G_n)>0, \inf_n \lambda(G_n)>0$). As discussed above, these three notions are equivalent; the precise quantitative relationships between them are given in the following proposition. Note that in this proposition, the size of the graph is irrelevant and does not affect any of the bounds.
\begin{proposition}\label{prop: graph expansions relations}
Let $G$ be a $d$-regular graph. Then we have
\begin{propenum}
\item $\frac {\mu(G)}d\leq h(G)\leq \mu(G)$;\label{propit:vertex-edge}
\item $\frac{\lambda(G)}{2}\leq h(G)\leq \sqrt{2\lambda(G)}$.\label{propit:cheeger}
\end{propenum}
\end{proposition}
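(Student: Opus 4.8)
The plan is to handle the two items separately: item (1) is a double-counting argument comparing edges across a cut with the outer vertex boundary, while item (2) is the discrete Cheeger inequality, whose harder direction ($h(G)\le\sqrt{2\lambda(G)}$) requires a rounding ("sweep-cut") argument.

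For item (1), fix any $W\subseteq[n]$ with $1\le|W|\le n/2$. Each vertex of $\partial_{out}(W)$ is an endpoint of at least one and at most $d$ edges of $\partial W$, so $|\partial_{out}(W)|\le|\partial W|\le d\,|\partial_{out}(W)|$. Dividing through by $d|W|$ gives $\frac{|\partial_{out}(W)|}{d|W|}\le\frac{|\partial W|}{d|W|}\le\frac{|\partial_{out}(W)|}{|W|}$ for every admissible $W$; taking the minimum over $W$ on each side and using that $\min_W\frac{|\partial_{out}(W)|}{d|W|}=\frac{\mu(G)}{d}$ and $\min_W\frac{|\partial_{out}(W)|}{|W|}=\mu(G)$ yields both inequalities of (1) at once.

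For item (2) I would use the Rayleigh-quotient description $\lambda(G)=\min_{f\perp\mathbf{1},\,f\neq 0}\frac{\frac1d\sum_{\{i,j\}\in E}(f_i-f_j)^2}{\sum_i f_i^2}$, which follows from the identity $\langle f,Lf\rangle=\frac1d\sum_{\{i,j\}\in E}(f_i-f_j)^2$ together with the spectral theorem. The lower bound $\frac{\lambda(G)}{2}\le h(G)$ is then easy: for $W$ with $|W|\le n/2$ plug $\mathbf{1}_W$ into the quotient after subtracting its $\mathbf{1}$-component; the numerator becomes $|\partial W|/d$, the squared norm of the orthogonal part is $|W|(1-|W|/n)\ge|W|/2$, and so $\lambda(G)\le\frac{2|\partial W|}{d|W|}$, whence $\lambda(G)\le 2h(G)$ after minimizing over $W$. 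For the upper bound $h(G)\le\sqrt{2\lambda(G)}$ — the main obstacle — take an eigenfunction $f$ for $\lambda(G)$, reindex so $f_1\le\cdots\le f_n$, and set $g=f-f_{\lceil n/2\rceil}\mathbf{1}$; its Rayleigh quotient is still at most $\lambda(G)$ since $\langle g,Lg\rangle=\langle f,Lf\rangle$ while $\|g\|^2\ge\|f\|^2$. Write $g=g^+ + g^-$ (positive and negative parts). Using the pointwise inequality $(a-b)^2\ge(a^+-b^+)^2+(a^--b^-)^2$ and the additivity $\|g\|^2=\|g^+\|^2+\|g^-\|^2$, one of $g^+$ or $|g^-|$ — call it $h$, and note $|\supp h|\le n/2$ — satisfies $\sum_{\{i,j\}\in E}(h_i-h_j)^2\le\lambda(G)\,d\sum_i h_i^2$. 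Finally round: for $t$ uniform in $[0,\max_i h_i^2)$ and $S_t=\{i:h_i^2>t\}$, compute $\mathbb{E}|S_t|$ and $\mathbb{E}|\partial S_t|$ exactly, bound $\sum_{\{i,j\}\in E}|h_i^2-h_j^2|\le\bigl(\sum(h_i-h_j)^2\bigr)^{1/2}\bigl(\sum(h_i+h_j)^2\bigr)^{1/2}\le\sqrt{\lambda(G)\,d\sum h_i^2}\cdot\sqrt{2d\sum h_i^2}$ via Cauchy–Schwarz (using $\sum_{\{i,j\}\in E}(h_i+h_j)^2\le 2d\sum_i h_i^2$), and conclude by averaging that some threshold gives a set $S_t$ with $1\le|S_t|\le n/2$ and $\frac{|\partial S_t|}{d|S_t|}\le\sqrt{2\lambda(G)}$.

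I expect the only genuinely fiddly point to be the bookkeeping in the sweep-cut step: choosing the shift so that \emph{both} $g^+$ and $g^-$ have support at most $n/2$, restricting $t$ to $[0,\max_i h_i^2)$ so that $|S_t|\ge 1$, and being careful about strict versus non-strict thresholds. All of this is routine, and none of it affects the stated constants.
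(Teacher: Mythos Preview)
Your proof is correct and follows the standard route. The paper itself does not prove this proposition: it simply remarks that part~(1) is straightforward and attributes part~(2), the discrete Cheeger inequality, to Dodziuk and to Alon--Milman and Alon. So there is nothing in the paper to compare against; what you have written is exactly the kind of argument the authors are implicitly invoking. Your double-counting for part~(1) is clean, and your sweep-cut argument for part~(2) is the textbook proof with the right constants; the bookkeeping points you flag (the median shift to control both supports, the half-open threshold interval) are handled correctly.
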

The proof of \cref{propit:vertex-edge} is straightforward. \cref{propit:cheeger} is a discrete analogue of the celebrated \emph{Cheeger's inequality} \cite{Cheeger70}, proved by Dodziuk \cite{Dod84}, and independently by Alon--Milman \cite{AM85} and Alon \cite{Alo86}. Note that we think of the degree $d$ as a constant, so that we only lose a constant factor when moving between the notions of vertex and edge expansion, and only lose a quadratic factor when moving between these and the notion of spectral expansion.

In this paper, we are interested in studying linear-algebraic notions of expansion and their relationships. There are several well-studied notions of linear-algebraic expansion, including dimension expanders (introduced by Barak, Impagliazzo, Shpilka, and Wigderson \cite{BISW04}), quantum expanders (introduced independently by Hastings \cite{PhysRevA.76.032315} and by Ben-Aroya and Ta-Shma \cite{BT07}), and quantum edge expanders (introduced by Hastings \cite{PhysRevA.76.032315}). We will not discuss here the motivations for these definitions (besides being very natural extensions of the related graph-theoretic parameters), but note that they have led to much further exploration. For dimension expansion see e.g.\ \cite{DS09,BOURGAIN2009357,Bourgain2013,DW10,FG15,LUBOTZKY2008730} and for quantum expansion see e.g.\ \cite{Harrow08,BST10,10.5555/2011781.2011790,zigzag,FM20,KLR21}. Many of these papers and others deal with the important problem of explicitly constructing quantum and dimension expanders; some make use of connections between different notions of expansion, and have led to the introduction of new notions of expansion\footnote{e.g.\ monotone expanders \cite{DS09,Bourgain2013,DW10}.}. The linear-algebraic notions of expansion will be formally defined momentarily, but first we wish to make some high-level remarks about them. 

Dimension expansion is defined in natural analogy to the graph-theoretic definition of vertex expansion, quantum expansion is defined in natural analogy to spectral expansion, and quantum edge expansion is defined in natural analogy to edge expansion. Because of these analogies, it is natural to wonder whether the three notions are equivalent. Hastings \cite{PhysRevA.76.032315} proved an analogue of \cref{propit:cheeger}, showing that quantum expansion and quantum edge expansion are equivalent. Additionally, it is implicit in work of Lubotzky and Zelmanov \cite{LUBOTZKY2008730} that (under mild assumptions) quantum expansion implies dimension expansion. However, no reverse implication was known, nor any analogue of \cref{propit:vertex-edge} relating dimension expansion and quantum edge expansion.

Our first result is that such statements, showing that dimension expansion implies quantum expansion or quantum edge expansion, are \emph{false}. Indeed, we show the existence of dimension expanders that are arbitrarily poor quantum expanders (and thus arbitrarily poor quantum edge expanders). Moreover, we are able to explain ``why'' no such equivalence holds: it is because there is a ``missing'' fourth notion of linear-algebraic expansion, which we term \emph{dimension edge expansion}. This is yet another natural linear-algebraic analogue of edge expansion, which had not been previously defined. For this notion, it is straightforward to show an analogue of \cref{propit:vertex-edge}, proving that dimension expansion and dimension edge expansion are equivalent. 
Additionally, we prove that quantum edge expansion implies dimension edge expansion.

\begin{figure}[t]
\begin{center}
\tikzset{cross/.style={cross out, draw, 
         minimum size=2*(#1-\pgflinewidth), 
         inner sep=0pt, outer sep=0pt}}
    \begin{tikzpicture}[double distance=2pt, outer sep=1mm, exp/.style={draw, rectangle, rounded corners}, eq/.style={{Implies[]}-{Implies[]}, double, line width=.7pt}, ar/.style={-{Implies[]}, double, line width=.7pt}]
        \node[exp] at (0,0) (edge) {edge};
        \node[exp] (spectral) at (-4.5,0) {spectral};
        \node[exp] at (4.5,0) (vertex) {vertex};
        \node[anchor=west] at (-10,0) {\bf Graph-theoretic:};
        \draw[eq] (spectral) -- (edge) ;
        \draw[eq] (vertex) -- (edge) ;

        \node[anchor=west] at (-10,-3) {\bf Linear-algebraic:};
        \node[exp] at (-4.5,-3) (quant) {quantum};
        \node[exp] at (0,-2) (quantedge) {quantum edge};
        \node[exp] at (0,-4) (dimedge) {dimension edge};
        \node[exp] at (4.5,-3) (dim) {dimension};
        \draw[eq] (quant) -- (quantedge);
        \draw[eq] (dim) -- (dimedge);
        \draw[ar] ([xshift=-10pt]quantedge.south) -- ([xshift=-10pt]dimedge.north);
        \draw[ar, dashed] ([xshift=10pt]dimedge.north) -- ([xshift=10pt]quantedge.south) node[pos=.5, cross=7pt, black, solid, line width=1.5pt] {};
    \end{tikzpicture}
\caption{A schematic depiction of the relationships between different notions of expansion. }
\label{fig:expansion-notions}
\end{center}
\end{figure}

To understand what all these implications mean,
consider \cref{fig:expansion-notions} above, which clarifies the conceptual value of the new definition. We stress, as the figure suggests, that both linear-algebraic notions of edge expansion specialize to the same graph-theoretic one. Additionally, the figure shows the two equivalences discussed above, namely that both ``quantum'' notions are equivalent, and both ``dimension'' notions are equivalent. Moreover, it depicts the fact that quantum edge expansion implies dimension edge expansion. This connection yields a new proof of the Lubotzky--Zelmanov result discussed above; moreover, our new proof is more modular, and gives a result that is both qualitatively and quantitatively stronger.

Finally, as depicted in the figure, our negative result shows that there is no reverse implication, as there exist dimension expanders which are not quantum expanders. Said differently, in the linear-algebraic setting, the ``quantum'' notions of expansion are strictly stronger than the ``dimension'' notions. We stress again the surprising consequence: although both linear-algebraic notions of edge expansion generalize the \emph{same} graph-theoretic notion, they are not equivalent.



We now turn to the formal definitions of these linear-algebraic notions.
Once the definitions are in place, we can state our main theorems.

\subsection{Definitions of linear-algebraic expansion}

Throughout, our main object of study will a \emph{matrix tuple} $\vB=(B_1,\dots,B_d) \in \M(n,\F)^d$, where $\F$ is some field and $\M(n,\F)$ denotes the space of $n\times n$ matrices over $\F$. The above notions of linear-algebraic expansion will all be properties of such matrix tuples $\vB$. The analogies to graphs will be apparent when considering $\vB$ as a tuple of \emph{permutation} matrices, which naturally define a $d$-regular graph\footnote{For it to define an undirected graph, it must be a symmetric set of permutations, but this is essentially without loss of generality. Additionally, it is well-known \cite{Gross77} that any $d$-regular graph can be decomposed as a union of $d$ permutations (at least when $d$ is even), so we may always view a $d$-regular graph as a tuple of permutation matrices.}.

\subsubsection{Quantum expansion and quantum edge expansion}
When working with quantum expanders and quantum edge expanders, we work with the field $\F=\bC$. Additionally, rather than working with arbitrary matrix tuples, we work with \emph{doubly stochastic matrix tuples}, which are those tuples $\vB=(B_1,\dots,B_d) \in \M(n,\bC)^d$ with $\sum_{i=1}^d B_i B_i^* = \sum_{i=1}^d B_i^* B_i=dI_n$, where $I_n$ is the $n\times n$ identity matrix. An important special case is that of \emph{unitary matrix tuples}, where each $B_i$ is a unitary matrix.

\begin{definition}
    Given a doubly stochastic matrix tuple $\vB=(B_1,\dots,B_d) \in \M(n,\bC)^d$, the \emph{associated quantum operator} is the linear map $\Phi_\vB:\M(n,\bC) \to \M(n,\bC)$ defined by
    \[
        \Phi_\vB (X) = \frac 1d \sum_{i=1}^d B_i X B_i^*.
    \]
\end{definition}
The quantum operator $\Phi_\vB$ should be thought of as an analogue of the normalized adjacency matrix $A$ of a graph $G$. Indeed, it is straightforward to check that if each $B_i$ is a permutation matrix, and if $X$ is a diagonal matrix, then $\Phi_\vB(X)$ is also diagonal, and the diagonal entries are precisely the entries of $Ax$, where $x$ is the vector of diagonal entries of $X$. Similarly, it is easy to verify that the largest eigenvalue of $\Phi_\vB$ is $1$, with eigenvector $I_n$. Continuing the analogy, we  define the \emph{Laplacian operator} by $\Lambda_\vB \coloneqq \mathcal{I}- \Phi_\vB$, where $\mathcal{I}$ is the identity map $\M(n,\bC) \to \M(n,\bC)$.

Based on this analogy between quantum operators and adjacency matrices, the following definition\footnote{We remark that often, quantum expansion is defined for a quantum operator, rather than for a tuple of matrices, where a \emph{quantum operator} is defined abstractly as a linear map $\M(n,\bC) \to \M(n,\bC)$ satisfying certain properties. However, it is well-known \cite[Theorem 8.2]{NC00} that any quantum operator arises in an essentially unique way from a doubly stochastic matrix tuple, so the two perspectives are equivalent.} from \cite{PhysRevA.76.032315,BT07} is a natural analogue of the definition of spectral expansion of a graph.
\begin{definition}
    Given a doubly stochastic matrix tuple $\vB=(B_1,\dots,B_d) \in \M(n,\bC)^d$, its \emph{quantum expansion}, $\lambda(\vB)$, is defined to be the second-smallest singular value\footnote{In general, $\Phi_\vB$ (and thus also $\Lambda_\vB$) may not be a self-adjoint operator on $\M(n,\bC)$, and hence its eigenvalues may not be real; this is why we restrict our attention to singular values. In many special cases (e.g.\ if all the matrices $B_i$ are Hermitian, or if the tuple $\vB$ is a symmetric set of permutation matrices), $\Phi_\vB$ and $\Lambda_\vB$ are self-adjoint, and then we may equivalently define $\lambda(\vB)$ as the second-smallest eigenvalue of $\Lambda_\vB$.} of $\Lambda_\vB$.

    For fixed $d$ and $\lambda>0$, we say that a family of doubly stochastic matrix tuples $\{\vB_n=(B_1, \dots, B_d)\in \M(n, \bC)^d\mid n\in \N\}$ is an \emph{$(n,d,\lambda)$-quantum expander} if $\lambda(\vB_n)\geq \lambda$ for all $n\in\N$. 
\end{definition}


Similarly, the following definition from \cite{PhysRevA.76.032315} is a linear-algebraic analogue of the edge expansion of a graph.

\begin{definition}
    Given a doubly stochastic matrix tuple $\vB =(B_1,\dots,B_d) \in \M(n,\bC)^d$, its \emph{quantum edge expansion} is defined as
    \begin{equation}\label{eq: quantum edge expansion}
    h_Q(\vB)\coloneqq \min_{\substack{V\leq \bC^n\\ 1\leq\dim(V)\leq \frac n2}}\frac{\langle 
    I_n-P_V,\Phi_\vB(P_V)\rangle}{\dim(V)},
    \end{equation}
    where $P_V$ is the orthogonal projection onto the subspace $V\leq \bC^n$, and where $\langle \cdot,\cdot\rangle$ denotes the standard inner product on $\M(n,\bC)$.
    
    For fixed $d$ and $h>0$, we say that a family of doubly stochastic matrix tuples $\{\vB_n=(B_1, \dots, B_d)\in \M(n, \bC)^d\mid n\in \N\}$ is an \emph{$(n,d,h)$-quantum edge expander} if $h_Q(\vB_n)\geq h$ for all $n\in\N$. 
\end{definition}
Suppose again that each $B_i$ is a permutation matrix. Let $W \subseteq [n]$, and suppose that $V = \langle e_j\rangle_{j \in W}$ is a \emph{coordinate subspace}, spanned by a subset of the standard basis $\{e_1,\dots,e_n\}$ of $\bC^n$. Then $P_V$ is simply a diagonal matrix with a $1$ in positions indexed by $W$ and a $0$ elsewhere. Similarly, as discussed above, $\Phi_\vB(P_V)$ is another diagonal matrix, whose diagonal entries are precisely the entries of $A\mathbf 1_W$, where $A$ is the normalized adjacency matrix of $G$ and $\mathbf 1_W$ is the indicator vector of $W$. Therefore, $\langle I_n-P_V, \Phi_\vB(P_V) \rangle$ is equal to $\mathbf 1_{[n] \setminus W} A \mathbf 1_W$, which in turn equals $\frac 1 d \ab{\partial W}$. Thus, when $\vB$ comprises permutation matrices and when we restrict the minimum to coordinate subspaces $V \leq \bC^n$, the definition of quantum edge expansion precisely recovers the definition of edge expansion of a graph.

\subsubsection{Dimension expanders and dimension edge expanders}\label{subsec:dim_exp}
Dimension expansion and dimension edge expansion are well-defined over any field, but for simplicity, we continue working with $\F=\bC$ for the moment.

Given a tuple of matrices $\vB=(B_1, \dots, B_d)\in \M(n, 
\bC)^d$, the image of $V\leq\bC^n$ under $\vB$ is 
$\vB(V)\coloneqq \langle \cup_{i\in[d]}B_i(V)\rangle$, where $B_i(V) \coloneqq \{B_iv:v \in V\}$ and $\langle\cdot\rangle$ denotes 
linear span over $\bC$. 
\begin{definition}
The \emph{dimension expansion} of a matrix tuple $\vB=(B_1,\dots,B_d)\in \M(n,\bC)^d$ is defined as
\begin{equation}\label{eq: dimension expansion}
    \mu(\vB)\coloneqq \min_{\substack{V\leq \bC^n \\1\leq\dim(V)\leq \frac n2}}\frac{\dim(V+\vB(V))-\dim(V)}{\dim(V)}.
\end{equation}
For fixed $d$ and $\mu >0$, we say that a family of matrix tuples $\{\vB_n=(B_1, \dots, B_d)\in \M(n, \bC)^d\mid n\in \N\}$ is an \emph{$(n,d,\mu)$-dimension expander} if $\mu(\vB_n)\geq \mu$ for all $n\in\N$. 
\end{definition}
To see the analogy to vertex expansion in graphs, suppose that each $B_i$ is a permutation matrix, let $W \subseteq [n]$, and suppose that $V = \langle e_j\rangle_{j \in W}$ is a {coordinate subspace}. Then $B_i(V)$ is another coordinate subspace, spanned by the images of $W$ under the permutation $B_i$. Therefore, $\dim(V+\vB(V))-\dim(V)$ precisely counts how many elements in the complement of $W$ are in the image of some $B_i$. In other words, if we restrict the definition of dimension expansion to tuples of permutation matrices, and restrict the minimum to {coordinate subspaces} $V \leq \bC^n$, we precisely recover the definition of vertex expansion in graphs.


The same perspective motivates our new definition of dimension edge expanders. To define them, we first need to define the restriction of a matrix to a subspace.
Let $V \leq \bC^n$ be a subspace of dimension $r$, and let $T \in \M(n\times r,\bC)$ be a matrix whose columns form an orthonormal basis of $V$. Similarly, let $R \in \M( (n-r) \times n, \bC)$ be a matrix whose rows form an orthonormal basis of $V^\perp$,
the orthogonal complement of $V$. Given $B \in \M(n,\bC)$, its restriction to $(V^\perp, V)$ is defined by $B|_{V^\perp, V}\coloneqq RBT$.
Note that while $T$ and $R$ are not unique, a different choice of $T$ and $R$ would give rise to a matrix that is equivalent to $B|_{V^\perp, V}$ up to multiplication by unitary matrices (and thus in particular has the same rank).

With this setup, we can define dimension edge expansion.

\begin{definition}\label{def:dim_edge_exp}
For a matrix tuple $\vB=(B_1, \dots, B_d)\in\M(n, \bC)^d$, the \emph{dimension edge expansion} of $\vB$ is defined as
\begin{equation}\label{eq: dim edge expansion}
    h_D(\vB)=\min_{\substack{V \leq \bC^n\\
            1\leq\dim(V)\leq \frac n2}}\frac{\sum_{i=1}^d\rk(B_i|_{V^\perp, V})}{d\cdot\dim(V)}.
\end{equation}
For fixed $d$ and $h>0$, we say that a family of matrix tuples $\{\vB_n=(B_1, \dots, B_d)\in \M(n, \bC)^d\mid n\in \N\}$ is an \emph{$(n,d,h)$-dimension edge expander} if $h_D(\vB_n)\geq h$ for all $n\in\N$. 
\end{definition}
Again suppose that $V=\langle e_i\rangle_{i \in W}$ is a coordinate subspace corresponding to some $W \subseteq [n]$.
Then $B_i|_{V^\perp,V}$ is simply the submatrix of $B_i$ with columns indexed by $W$ and rows indexed by $[n] \setminus W$. In case $B_i$ is a permutation matrix, the rank of this submatrix is precisely the number of non-zero entries in it, which is equal to the number of elements of $W$ mapped to $[n] \setminus W$ by the permutation $B_i$. In other words, if we restrict our attention to permutation matrices and to coordinate subspaces, we again recover the definition of edge expansion of graphs.

\begin{remark}\label{rmk:def-annihilator}
	To extend the definition of dimension edge expansion to any field $\F$, we denote by $V^\perp$ the \emph{annihilator} of $V\leq \F^n$ with respect to the standard dot product, namely $V^{\perp}:=\left\{u \in\mathbb{F}^n \mid u^t v=0\; \forall v \in V\right\}$. 
    Then let $T\in \M(n\times r, \F)$ be a matrix whose columns form a basis of $V$, and $R\in \M((n-r)\times n, \F)$ whose rows form a basis of $V^\perp$. Given $B \in \M(n,\F)$, its restriction to $(V^\perp, V)$ is defined by $B|_{V^\perp, V}\coloneqq RBT$. Dimension edge expansion is now defined identically to \cref{def:dim_edge_exp}.
\end{remark}


\subsection{Main results}
As discussed in \cref{subsec:overview}, our results demonstrate that in the linear-algebraic setting, the relationship between the various notions of expansion is substantially subtler than it is in the graph-theoretic setting. Our first main result is negative: it says that in general, a matrix tuple can be a good dimension expander while having arbitrarily poor quantum expansion. 
\begin{theorem}\label{thm:main-negative}
    A dimension expander may be an arbitrarily poor quantum expander. More precisely,
    there are constants $\mu>0$ and $d \in \N$ so that for all $\varepsilon>0$ and all sufficiently large $n$, there exists a unitary matrix tuple $\vB =(B_1,\dots,B_d) \in \M(n,\bC)^d$ such that $\mu(\vB) \geq \mu$ but $\lambda(\vB) < \varepsilon$.
\end{theorem}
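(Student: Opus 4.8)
The plan is to exhibit the desired tuple $\vB$ by a randomized construction, combining a known good dimension expander with a "scrambling" modification that destroys quantum expansion. First I recall the basic principle (essentially from \cite{BISW04}) that dimension expansion is not hard to achieve: a tuple of Haar-random unitaries $(U_1,\dots,U_k)$ on $\bC^n$ is, with high probability, an $(n,k,\mu)$-dimension expander for some absolute constant $\mu>0$ and some absolute constant $k$. More useful here is the deterministic fact that one may take $k=O(1)$ fixed once and for all, and that dimension expansion is robust: adding extra matrices to a dimension-expanding tuple only helps, and conjugating all matrices simultaneously by a fixed unitary does not change $\mu(\vB)$. So the strategy is to start with a fixed tuple $\vA=(A_1,\dots,A_k)$ achieving $\mu(\vA)\ge\mu$, then pad it with additional matrices chosen so that the enlarged tuple $\vB$ is doubly stochastic (indeed unitary) but has $\lambda(\vB)<\varepsilon$.

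The key idea for killing quantum expansion is that $\lambda(\vB)$ is small whenever $\Phi_\vB$ has an eigenvalue close to $1$ on the traceless subspace; equivalently, whenever there is a traceless $X$ with $\Phi_\vB(X)\approx X$, i.e.\ when the $B_i$ "approximately commute with $X$" in an averaged sense. I would engineer this by choosing most of the $B_i$ to lie in a small common structured subgroup — for instance, letting the bulk of the tuple consist of many copies (or products) of matrices that all commute with a fixed nontrivial traceless Hermitian projection $X_0$, such as the permutation matrices of the cyclic shift and its powers, or block-diagonal unitaries respecting a fixed balanced bipartition $\bC^n=U\oplus U^\perp$. Concretely: take $d=k+m$ where $m$ is large compared to $k$; let $B_1,\dots,B_k$ be the fixed dimension expander $\vA$ (rescaled/unitarized if needed), and let $B_{k+1},\dots,B_d$ all be block-diagonal with respect to $\bC^n=U\oplus U^\perp$ with $\dim U=n/2$. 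Then for $X_0 = P_U - P_{U^\perp}$ (traceless, norm $\sqrt n$), each $B_{k+j}$ satisfies $B_{k+j}X_0B_{k+j}^*=X_0$, so $\Phi_\vB(X_0) = \frac1d\big(\sum_{i\le k}B_iX_0B_i^* + m X_0\big)$, whence $\langle X_0,\Lambda_\vB(X_0)\rangle/\langle X_0,X_0\rangle \le \frac{2k}{d}$ by Cauchy–Schwarz on the first $k$ terms. Choosing $m\gg k/\varepsilon$ forces this Rayleigh quotient below $\varepsilon$, and since $X_0$ is traceless this bounds the second-smallest singular value of $\Lambda_\vB$ (using that $\Lambda_\vB$ fixes $I_n$ and preserves the traceless subspace), giving $\lambda(\vB)<\varepsilon$. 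Meanwhile $\mu(\vB)\ge\mu(\vA)\ge\mu$ because the first $k$ matrices alone already expand every subspace, and adding matrices cannot shrink $\vB(V)$. One must double-check the normalization so that $\vB$ is genuinely doubly stochastic/unitary with the $\tfrac1d$ in $\Phi_\vB$: since all $B_i$ are unitary this is immediate.

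I expect the main obstacle to be the quantitative control tying the Rayleigh quotient bound to the \emph{singular value} $\lambda(\vB)$ rather than an eigenvalue: because $\Phi_\vB$ need not be self-adjoint, a small value of $\langle X_0,\Lambda_\vB(X_0)\rangle$ does not by itself bound a singular value of $\Lambda_\vB$ from above. To handle this cleanly I would instead make $\Phi_\vB$ self-adjoint by using only Hermitian matrices $B_i$, or by closing the tuple under adjoints (replacing $\vB$ by $(\vB,\vB^*)$, which changes $d$ by a factor $2$, preserves double stochasticity, and does not decrease $\mu$); then $\Lambda_\vB$ is a positive semidefinite self-adjoint operator on the traceless subspace, its second-smallest singular value equals its smallest eigenvalue there, and the Rayleigh quotient at $X_0$ directly certifies $\lambda(\vB)<\varepsilon$. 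The remaining routine points are: verifying the existence of the fixed $O(1)$-size dimension expander $\vA$ (cite \cite{BISW04} or take random unitaries), checking that the block-diagonal padding matrices can be chosen to keep everything unitary and Hermitian simultaneously (e.g.\ take them to be $\pm1$ diagonal or real symmetric permutation-like matrices block-respecting $U$), and confirming the inequality $\mu(\vB,\vB^*)\ge\mu(\vA)$ from monotonicity of $\dim(V+\vB(V))$ under enlarging the tuple.
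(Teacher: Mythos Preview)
Your construction has a genuine gap: the degree $d$ is not constant. In your padding argument you take $d=k+m$ and then choose $m\gg k/\varepsilon$ to force the Rayleigh quotient of $\Lambda_\vB$ at $X_0$ below $\varepsilon$; consequently $d$ grows like $1/\varepsilon$. But the theorem fixes the constants $\mu$ and $d$ \emph{before} quantifying over $\varepsilon$: one must produce, for a single fixed $d$, tuples in $\M(n,\bC)^d$ with $\mu(\vB)\ge\mu$ and $\lambda(\vB)$ arbitrarily small. Your monotonicity observation ``adding matrices cannot shrink $\vB(V)$'' is precisely what makes the padding harmless for dimension expansion, but it is also what prevents the argument from saying anything in the fixed-$d$ regime. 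With $d$ allowed to grow, the statement becomes essentially trivial (any dimension expander padded with enough identities already works), so this is not a technicality but the heart of the matter.

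The paper's proof keeps $d$ fixed and instead varies a continuous parameter: starting from a unitary $(n,d,d\mu)$-dimension expander $\vU$, it takes small fractional powers $\vU^s=(U_1^s,\dots,U_d^s)$. As $s\to 0$ the tuple converges to $(I_n,\dots,I_n)$, so $\lambda(\vU^s)\to 0$ by a direct estimate on $\|X-\Phi_{\vU^s}(X)\|_{S_2}$. The nontrivial content is a lemma showing that $\mu(\vU^s)\ge\mu(\vU)/d$ for all sufficiently small $s$: for each subspace $W$ one finds, via a Taylor-expansion analysis of the trajectories $s\mapsto U_i^s v$, a subspace $V\le W$ and an index $i$ with $U_i^s V\cap W=\{0\}$ for all small $s$, and then a compactness argument over the Grassmannian makes the threshold uniform in $W$. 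If you want to repair your approach within a fixed $d$, you would need the first $k$ matrices themselves to nearly fix $X_0$ while still dimension-expanding---which is exactly the tension the fractional-power argument resolves.
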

In fact, we prove something stronger: given any unitary matrix tuple $\vB_0 \in \M(n,\bC)^d$, we can find another unitary matrix tuple $\vB \in \M(n,\bC)^d$ so that $\mu(\vB) \geq \mu(\vB_0)/d$ and $\lambda(\vB) < \varepsilon$. Moreover $\vB$ is obtained from $\vB_0$ in a very simple way: each matrix in $\vB$ is simply a very small fractional power of the corresponding matrix in $\vB_0$. Perhaps surprisingly, our proof uses a number of compactness arguments, and thus gives no quantitative information on how small this fractional power must be.

In the other direction, we have a number of relationships between the various notions of linear-algebraic expansion, in analogy to \cref{prop: graph expansions relations}.
\begin{theorem}\label{thm:main-relationships}
    Let $\vB=(B_1,\dots,B_d) \in \M(n,\bC)^d$ be a doubly stochastic matrix tuple. Then we have
    \begin{thmenum}
        \item $\frac {\mu(\vB)}d \leq h_D(\vB) \leq \mu(\vB)$;\label{thmit:dim}
        \item $\frac{\lambda(\vB)}{2} \leq h_Q(\vB) \leq \sqrt{2\lambda(\vB)}$;\label{thmit:quant-cheeger}
        \item $h_Q(\vB) \leq d\cdot h_D(\vB)$.\label{thmit:edge-comparison}
    \end{thmenum}
    In case $\vB$ is a unitary matrix tuple, we may replace {\rm (3)} by the stronger bound
    \begin{thmenum}[start=4]
        \item $h_Q(\vB) \leq h_D(\vB)$.\label{thmit:unitary-edge-comparison}
    \end{thmenum}
\end{theorem}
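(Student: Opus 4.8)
The plan is to reduce all four inequalities to two elementary identities about restrictions of matrices to subspaces, and then to obtain (1) by a subadditivity/averaging argument, (3) and (4) by a single Frobenius-versus-rank estimate, and (2) by invoking Hastings's quantum Cheeger inequality \cite{PhysRevA.76.032315}. First I would record, for any $B\in\M(n,\bC)$ and any $V\le\bC^n$ of dimension $r$ (writing $P_{V^\perp}=I_n-P_V$), the identity
\[
\rk\!\big(B|_{V^{\perp},V}\big)=\rk\!\big(P_{V^\perp}BP_V\big)=\dim\!\big(V+B(V)\big)-\dim(V).
\]
This holds because $B|_{V^{\perp},V}=RBT$ with $T^*T=I_r$, $TT^*=P_V$, $RR^*=I_{n-r}$, $R^*R=P_{V^\perp}$, so $P_{V^\perp}BP_V=R^*(RBT)T^*$ and left/right multiplication by the injective map $R^*$ and the surjective map $T^*$ preserves rank, while $\rk(P_{V^\perp}BP_V)=\dim P_{V^\perp}(B(V))=\dim B(V)-\dim(B(V)\cap V)$ since $\ker P_{V^\perp}=V$; over an arbitrary field one repeats this with the annihilator of \cref{rmk:def-annihilator}. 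Second, for a doubly stochastic tuple $\vB$, expanding $\Phi_\vB$ and using $P_V=P_V^2$, the cyclicity of the trace, and the Hermiticity of the projections, I would record
\[
\big\langle I_n-P_V,\ \Phi_\vB(P_V)\big\rangle=\frac1d\sum_{i=1}^d\trace\!\big(P_{V^\perp}B_iP_VB_i^*\big)=\frac1d\sum_{i=1}^d\fnorm{P_{V^\perp}B_iP_V}^2 ;
\]
in particular the numerator defining $h_Q$ is always nonnegative.

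For (1), I would fix $V$ with $1\le\dim V\le n/2$ and set $a_i\coloneqq\dim(V+B_i(V))-\dim(V)=\rk(B_i|_{V^{\perp},V})\ge0$. Subadditivity of $W\mapsto\dim(V+W)$ gives $\dim(V+\vB(V))-\dim(V)\le\sum_i a_i$, while trivially $\dim(V+\vB(V))-\dim(V)\ge\max_i a_i\ge\frac1d\sum_i a_i$. Dividing by $\dim(V)$ shows that at each such $V$ the quantity defining $h_D(\vB)$ is at most the one defining $\mu(\vB)$, which in turn is at most $d$ times it; taking minima over $V$ yields $\frac{\mu(\vB)}{d}\le h_D(\vB)\le\mu(\vB)$.

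For (3) and (4), I would fix $V$ and combine the second identity with the elementary bound $\fnorm{M}^2\le\rk(M)\cdot\lVert M\rVert_{\mathrm{op}}^2$ applied to $M=P_{V^\perp}B_iP_V$, together with $\lVert P_{V^\perp}B_iP_V\rVert_{\mathrm{op}}\le\lVert B_i\rVert_{\mathrm{op}}$ and $\rk(P_{V^\perp}B_iP_V)=\rk(B_i|_{V^{\perp},V})$ from the first identity. Since $\vB$ is doubly stochastic, $B_i^*B_i\preceq\sum_jB_j^*B_j=dI_n$, so $\lVert B_i\rVert_{\mathrm{op}}^2\le d$ and hence $\langle I_n-P_V,\Phi_\vB(P_V)\rangle\le\sum_i\rk(B_i|_{V^{\perp},V})$; dividing by $\dim(V)$ and minimizing over $V$ gives $h_Q(\vB)\le d\cdot h_D(\vB)$, which is (3). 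If each $B_i$ is unitary then $\lVert B_i\rVert_{\mathrm{op}}=1$, so the same computation gives $\langle I_n-P_V,\Phi_\vB(P_V)\rangle\le\frac1d\sum_i\rk(B_i|_{V^{\perp},V})$ and therefore $h_Q(\vB)\le h_D(\vB)$, which is (4).

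Finally, (2) is the quantum Cheeger inequality of Hastings \cite{PhysRevA.76.032315}, which I would simply cite: its easy half, $\lambda(\vB)/2\le h_Q(\vB)$, comes from plugging the test element $X=P_V-\frac{\dim V}{n}I_n\in I_n^{\perp}$ into the variational description of $\lambda(\vB)$ (immediate when $\Lambda_\vB$ is self-adjoint), while the reverse inequality $h_Q(\vB)\le\sqrt{2\lambda(\vB)}$ is Hastings's spectral-partitioning (rounding) argument and is the only genuinely delicate ingredient; I do not reproduce it. Apart from that, everything reduces to the two identities above, the one point needing care being to keep the operator-norm bounds $\lVert B_i\rVert_{\mathrm{op}}^2\le d$ and $\lVert B_i\rVert_{\mathrm{op}}=1$ apart, since it is precisely this that distinguishes (3) from (4).
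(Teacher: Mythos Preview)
Your argument is correct. Parts (2), (3), and (4) are handled exactly as in the paper: you cite Hastings for (2), and for (3)--(4) you rewrite $\langle I_n-P_V,\Phi_\vB(P_V)\rangle$ as $\frac1d\sum_i\fnorm{P_{V^\perp}B_iP_V}^2$ and then use $\fnorm{M}^2\le\rk(M)\lVert M\rVert_{\mathrm{op}}^2$ together with the operator-norm bounds $\lVert B_i\rVert_{\mathrm{op}}^2\le d$ (doubly stochastic) and $\lVert B_i\rVert_{\mathrm{op}}=1$ (unitary). This is precisely the paper's proof.

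Part (1) is where you diverge, and your route is cleaner. The paper does not isolate the identity $\rk(B|_{V^\perp,V})=\dim(V+B(V))-\dim(V)$; instead, to prove $\mu(\vB)\le d\,h_D(\vB)$ it introduces an auxiliary tuple $\vB'=(\vB,I_n)$, argues that $\mu(\vB')=\mu(\vB)$ and $h_D(\vB')=\frac{d}{d+1}h_D(\vB)$, and then bounds $\sum_i\rk(B_i|_{V^\perp,V})$ from below by $\rk(RW)$ where $W$ is a basis matrix for $\vB'(V)$ (the appended $I_n$ guarantees $V\le\vB'(V)$, which is used to get $\rk(RW)\ge t-r$). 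Your observation that each summand already equals $\dim(V+B_i(V))-\dim(V)$, combined with the elementary sandwich $\max_i a_i\le\dim(V+\sum_i B_i(V))-\dim(V)\le\sum_i a_i$, bypasses the auxiliary tuple entirely and gives the same pointwise-in-$V$ inequalities directly. The paper's approach has the minor advantage of being self-contained at the matrix level, but yours makes the linear-algebraic content (subadditivity of $W\mapsto\dim(V+W)-\dim(V)$) more transparent and works over arbitrary fields with no additional effort.
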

In fact, \cref{thmit:dim} holds for any matrix tuple over any field; the assumptions that we are working over $\bC$ and have a doubly stochastic matrix tuple are only necessary so that $\lambda(\vB)$ and $h_Q(\vB)$ are well-defined. As mentioned above, \cref{thmit:quant-cheeger} is not new, and was proved by Hastings \cite[Appendix A]{PhysRevA.76.032315}. So we do not prove \cref{thmit:quant-cheeger} in this paper, but we include it in the statement of \cref{thm:main-relationships} in order to make the analogy to \cref{prop: graph expansions relations} as transparent as possible.

Speaking of \cref{prop: graph expansions relations}, the key thing to stress about the difference between the graph-theoretic and linear-algebraic settings is that the \emph{single} graph-theoretic notion of edge expansion, which appears in both \cref{propit:cheeger,propit:vertex-edge}, corresponds to \emph{two distinct} linear-algebraic notions, namely dimension edge expansion and quantum edge expansion. That these two quantities are not equivalent is implied by \cref{thm:main-negative}, which gives a separation between $\mu$ and $\lambda$. However, the fact that there is an inequality relating them---\cref{thmit:edge-comparison}---shows that quantum expansion implies dimension expansion, as stated in the next corollary.

\begin{corollary}\label{cor:quantum-dimension}
    Every quantum expander is a dimension expander. More precisely, if $\vB \in \M(n,\bC)^d$ is a doubly stochastic matrix tuple, then
    \[
        \mu(\vB) \geq  \frac{\lambda(\vB)}{2d}.
    \]
    In case $\vB$ is a unitary matrix tuple, we have the stronger bound
    \[
        \mu(\vB) \geq  \frac{\lambda(\vB)}{2}.
    \]
\end{corollary}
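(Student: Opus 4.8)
The plan is to obtain this statement as an immediate consequence of \cref{thm:main-relationships}, by chaining together the inequalities proved there. The key point is that all four linear-algebraic parameters are linked in a single chain: dimension expansion bounds dimension edge expansion from above, which in turn is bounded from below by quantum edge expansion, which is bounded from below by quantum expansion via the easy direction of the quantum Cheeger inequality.

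Concretely, for a doubly stochastic tuple $\vB$, I would first invoke the right-hand inequality of \cref{thmit:dim}, namely $h_D(\vB) \le \mu(\vB)$, to reduce the task to lower-bounding $h_D(\vB)$. Next, \cref{thmit:edge-comparison} gives $h_D(\vB) \ge h_Q(\vB)/d$, and finally the easy half of the quantum Cheeger inequality, \cref{thmit:quant-cheeger}, gives $h_Q(\vB) \ge \lambda(\vB)/2$. Stringing these together yields
\[
\mu(\vB) \;\ge\; h_D(\vB) \;\ge\; \frac{h_Q(\vB)}{d} \;\ge\; \frac{\lambda(\vB)}{2d},
\]
which is the claimed bound. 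For the stronger estimate in the unitary case, the only change is to replace \cref{thmit:edge-comparison} with its unitary refinement \cref{thmit:unitary-edge-comparison}, which improves the middle inequality to $h_D(\vB) \ge h_Q(\vB)$, and hence $\mu(\vB) \ge \lambda(\vB)/2$.

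Since each step is a direct appeal to one part of \cref{thm:main-relationships}, there is no genuine obstacle in proving the corollary itself; all the work has already been done in establishing that theorem (in particular \cref{thmit:edge-comparison} and its unitary sharpening). The only thing worth stressing is conceptual rather than technical: the chain necessarily routes through \emph{both} linear-algebraic notions of edge expansion, so it is precisely the ``missing'' comparison $h_Q(\vB) \le d\cdot h_D(\vB)$ (resp.\ $h_Q(\vB)\le h_D(\vB)$) between the two edge notions that powers the Lubotzky--Zelmanov implication that quantum expanders are dimension expanders.
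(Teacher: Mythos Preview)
Your proposal is correct and matches the paper's proof essentially verbatim: the paper chains $\mu(\vB) \ge h_D(\vB) \ge h_Q(\vB)/d \ge \lambda(\vB)/(2d)$ via the three parts of \cref{thm:main-relationships}, and in the unitary case replaces the middle step by \cref{thmit:unitary-edge-comparison}.
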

\begin{proof}
    We have that
    \[
        \mu(\vB) \geq  h_D(\vB) \geq  \frac{h_Q(\vB)}{d} \geq  \frac{\lambda(\vB)}{2d},
    \]
    where the three inequalities follow from the first three parts of \cref{thm:main-relationships}. In case $\vB$ is unitary, we may replace the second inequality above by $h_D(\vB) \geq  h_Q(\vB)$, thanks to \cref{thmit:unitary-edge-comparison}.
\end{proof}

As discussed above, a similar result was proved by Lubotzky and Zelmanov \cite{LUBOTZKY2008730}\footnote{Lubotzky and Zelmanov were unaware of the notion of quantum expansion, and thus did not state this result in this language. Harrow \cite{Harrow08} was the first to observe that their approach passes through quantum expansion as an intermediate step.}.
\begin{theorem}[{\cite[Proposition 2.1]{LUBOTZKY2008730}}]\label{thm:LZ}
    Let $\vB=(B_1,\dots,B_d)\in\mathrm{M}(n,\bC)^d$ be a unitary matrix tuple. Then
    $$\mu(\vB) \geq \frac{\lambda(\vB)}{6}.$$
\end{theorem}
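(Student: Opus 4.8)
The plan is to read this off immediately from \cref{cor:quantum-dimension}, which for a unitary tuple $\vB$ already gives the stronger bound $\mu(\vB)\ge\lambda(\vB)/2$, hence in particular $\mu(\vB)\ge\lambda(\vB)/6$. Unwinding that corollary, the proof is the three-step chain
\[
  \mu(\vB)\;\ge\;h_D(\vB)\;\ge\;h_Q(\vB)\;\ge\;\tfrac12\lambda(\vB),
\]
where the first inequality is \cref{thmit:dim}, the second is \cref{thmit:unitary-edge-comparison} (the only place unitarity of $\vB$ is used), and the third is Hastings' quantum Cheeger inequality \cref{thmit:quant-cheeger}. So, granting \cref{thm:main-relationships}, there is nothing left to prove; the value of this formulation is that it factors one opaque inequality through three \emph{conceptually independent} ones, each an analogue of a standard graph fact, and in passing it improves the constant from $6$ to $2$.

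For context, and in case a self-contained argument is wanted, I would also record the original Lubotzky--Zelmanov route, which is direct but loses an extra power of $\lambda$. Fix $V\le\bC^n$ with $k\coloneqq\dim V\le n/2$, put $W\coloneqq V+\vB(V)$ and $\ell\coloneqq\dim W-k$, so the goal becomes $\ell/k\ge\lambda(\vB)/6$. Test the variational characterization of $\lambda(\vB)$ against $X\coloneqq P_V-\tfrac kn I_n$, which is traceless and hence orthogonal to the kernel direction $I_n$ of $\Lambda_\vB$ (if $\lambda(\vB)=0$ the statement is vacuous, and otherwise $\ker\Lambda_\vB=\linspan\{I_n\}$, so $\fnorm{\Lambda_\vB(X)}\ge\lambda(\vB)\fnorm{X}$). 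Since each $B_i$ is unitary, $B_iP_VB_i^{*}=P_{B_iV}$, whence $\Phi_\vB(P_V)=\tfrac1d\sum_i P_{B_iV}$, and $P_V$ together with every $P_{B_iV}$ is supported on $W$. From $\dim(V\cap B_iV)\ge k-\ell$ one gets $\trace(P_VP_{B_iV})=\fnorm{P_VP_{B_iV}}^2\ge k-\ell$, while $\Phi_\vB(P_V)$ is positive semidefinite with trace $k$ and spectrum in $[0,1]$, so $\trace(\Phi_\vB(P_V)^2)\le k$; expanding
\[
  \fnorm{\Lambda_\vB(P_V)}^2=k-2\trace\!\big(P_V\Phi_\vB(P_V)\big)+\trace(\Phi_\vB(P_V)^2)
\]
then yields $\fnorm{\Lambda_\vB(P_V)}^2\le 2\ell$. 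Since $\Lambda_\vB(X)=\Lambda_\vB(P_V)$ and $\fnorm{X}^2=k(1-k/n)\ge k/2$, combining these gives $\lambda(\vB)^2k/2\le 2\ell$, i.e.\ $\mu(\vB)\ge\lambda(\vB)^2/4$.

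The point I would stress is that a single test matrix produces only the quadratic bound $\mu\gtrsim\lambda^2$; upgrading to the linear dependence $\mu\gtrsim\lambda$ is precisely a Cheeger-type rounding phenomenon, and it is already carried out (on the quantum side) inside \cref{thmit:quant-cheeger}. Consequently the only genuine obstacle is \cref{thm:main-relationships} itself, whose technical content is Hastings' quantum Cheeger inequality together with the new comparison $h_D(\vB)\ge h_Q(\vB)$ for unitary tuples; once those are in hand, \cref{thm:LZ} follows with no further work and with a better constant. Thus my proof would simply invoke \cref{cor:quantum-dimension}.
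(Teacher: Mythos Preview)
Your main proof is correct and is exactly the paper's approach: the paper does not give a separate argument for \cref{thm:LZ} but instead presents \cref{cor:quantum-dimension} (which for unitary tuples gives $\mu(\vB)\ge\lambda(\vB)/2$) as a strict strengthening, so \cref{thm:LZ} is an immediate corollary.

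One correction to your contextual paragraph: the argument you sketch with the test matrix $X=P_V-\tfrac{k}{n}I_n$ is valid and does yield $\mu(\vB)\ge\lambda(\vB)^2/4$, but it is \emph{not} the original Lubotzky--Zelmanov route. Their proof in \cite{LUBOTZKY2008730} obtains the linear bound $\mu(\vB)\ge\lambda(\vB)/6$ directly (this is, after all, what \cref{thm:LZ} asserts), so your quadratic computation neither reproduces their argument nor proves the theorem as stated. Your diagnosis that the gap between $\lambda^2$ and $\lambda$ is a Cheeger-type phenomenon is morally right, but you should not attribute the weaker quadratic bound to Lubotzky--Zelmanov. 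If you want to keep that paragraph, rephrase it as ``a naive single-test-matrix argument gives only $\mu\ge\lambda^2/4$'' rather than as a summary of their proof.
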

Lubotzky and Zelmanov only stated this result for matrix tuples arising from irreducible unitary representations of finite groups, but their proof actually works in the generality of arbitrary unitary matrix tuples. 

Note that our result (\cref{cor:quantum-dimension}) is both quantitatively stronger (in that we obtain a better constant factor) and qualitatively stronger (as we obtain a result for all doubly stochastic matrix tuples, not only unitary tuples). Additionally, we believe that our proof is conceptually simpler than that of \cite{LUBOTZKY2008730}, since the implication from quantum to dimension expansion naturally breaks into three simpler implications (quantum implies quantum edge, which implies dimension edge, which implies dimension).

Lubotzky and Zelmanov \cite{LUBOTZKY2008730} proved \cref{thm:LZ} in order to explicitly construct dimension expanders over $\bC$. Indeed, this motivation is a natural reason to study the relationships between linear-algebraic notions of expansion: \cref{cor:quantum-dimension} implies that any construction of a quantum expander also yields a construction of a dimension expander, whereas \cref{thm:main-negative} shows that the converse does not hold.
We remark that explicit constructions of quantum and dimension expanders is an important and highly active area of research, which we do not discuss further, except to stress that this is a very natural reason to study connections between different notions of expansion.

\subsection{Connections between graphs and matrix spaces}
There is a natural way of associating a matrix tuple to a graph. Namely, given a $d$-regular graph $G=([n],E)$, we define the \emph{graphical matrix tuple} to be the matrix tuple\footnote{Note that we scale $\E_{i,j}$ by a factor of $\sqrt n$ in order to ensure that $\vB_G$ is a doubly stochastic matrix tuple, as $\sum_{\{i,j\}\in E} (\sqrt n \E_{i,j})(\sqrt n \E_{i,j})^*=\sum_{\{i,j\}\in E} (\sqrt n \E_{i,j})^*(\sqrt n \E_{i,j})=dnI_n$.} $\vB_G = (\sqrt n\E_{i,j}:\{i,j\} \in E)$, where $\E_{i,j}$ is the elementary matrix with a $1$ in position $(i,j)$ and zeros in all other entries. Note that $\vB_G$ is a tuple of $2\ab E=dn$ matrices, and thus is not particularly natural from the perspective of expansion: we are usually interested in families of matrix tuples where the length of the tuple stays constant as the dimension of the matrices grows. Here, even if the degree $d$ of $G$ is fixed, the number of matrices in $\vB_G$ tends to infinity with $n$.

Nonetheless, the construction of $\vB_G$ is very natural from other perspectives. Indeed, the matrices in $\vB_G$ form the standard basis for the \emph{graphical matrix space} associated to $G$. In \cite{moreconnections}, we proved that many important graph-theoretic properties of $G$ are equivalent to linear-algebraic properties of $\vB_G$. In particular, we proved a number of results that we term \emph{inherited correspondences}, which say that the value of a certain optimum associated to $G$ is equal to a related optimum associated to $\vB_G$, even though the feasible region in the latter optimum is generally much larger; for example, an optimum over all subgraphs of $G$ may equal an optimum over all subspaces of $\langle \vB_G\rangle$, even though there are many more subspaces than subgraphs.

As it turns out, the various expansion parameters give us a number of other results of this type. The first of these is due to Bannink, Bri\"{e}t, Labib, and Maassen \cite{quasirandom}, who proved that the quantum expansion of $\vB_G$ equals the spectral expansion of $G$.
\begin{theorem}[{\cite[Proposition 3.7]{quasirandom}}]\label{prop: spectral 
expansion generalization}
For every $d$-regular graph $G$, $\lambda(G)=\lambda(\vB_G)$.
\end{theorem}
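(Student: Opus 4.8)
The plan is to compute the Laplacian operator $\Lambda_{\vB_G}$ of the graphical tuple completely explicitly, and to observe that, after an orthogonal splitting of $\M(n,\bC)$, it is literally the normalized Laplacian of $G$ together with a harmless identity block. First I would compute $\Phi_{\vB_G}$. Since $\vB_G$ has length $2\ab E = dn$, we have $\Phi_{\vB_G}(X) = \frac1{dn}\sum_{(i,j)} (\sqrt n\,\E_{i,j})\,X\,(\sqrt n\,\E_{i,j})^*$, where the sum is over the ordered pairs $(i,j)$ with $\{i,j\}\in E$. Using $\E_{i,j}^* = \E_{j,i}$ one checks the identity $\E_{i,j} X \E_{j,i} = X_{jj}\,\E_{i,i}$, and hence $\Phi_{\vB_G}(X) = \frac1d \sum_{(i,j)} X_{jj}\,\E_{i,i}$. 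In particular $\Phi_{\vB_G}(X)$ is always a diagonal matrix, depends only on the diagonal of $X$, and if $X = \diag(x)$ then $\Phi_{\vB_G}(X) = \diag(Ax)$, where $A$ is the normalized adjacency matrix of $G$.

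Next I would split $\M(n,\bC) = \cD \oplus \cD^{\perp}$ orthogonally (with respect to the trace inner product), where $\cD$ is the space of diagonal matrices, identified with $\bC^n$, and $\cD^{\perp}$ is the space of matrices with zero diagonal. The computation above shows that $\Phi_{\vB_G}$ annihilates $\cD^{\perp}$ and maps $\cD$ into $\cD$ acting exactly as $A$; hence $\Lambda_{\vB_G} = \mathcal I - \Phi_{\vB_G}$ is block-diagonal for this decomposition, equal to the identity on $\cD^{\perp}$ and equal to $L \coloneqq I_n - A$, the normalized Laplacian of $G$, on $\cD \cong \bC^n$. Because $G$ is undirected and $d$-regular, $A$ (and hence $L$, and hence $\Lambda_{\vB_G}$) is self-adjoint and positive semidefinite, so the singular values of $\Lambda_{\vB_G}$ coincide with its eigenvalues; equivalently, the singular values of the $\cD$-block are the eigenvalues of $L$ and the $\cD^{\perp}$-block contributes the singular value $1$ with multiplicity $n^2-n$.

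It then remains to read off the second-smallest singular value. The multiset of singular values of $\Lambda_{\vB_G}$ is the multiset of eigenvalues of $L$ together with $n^2-n$ extra copies of $1$. The smallest eigenvalue of $L$ is $0$, attained on the all-ones vector (equivalently $\Lambda_{\vB_G}(I_n) = 0$, matching the general fact that $I_n$ is the top eigenvector of $\Phi_{\vB_G}$), so the second-smallest singular value of $\Lambda_{\vB_G}$ is exactly the second-smallest eigenvalue of $L$, which is $\lambda(G)$ by definition. This yields $\lambda(\vB_G) = \lambda(G)$.

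I do not expect a deep obstacle: all the content is in the elementary identity $\E_{i,j} X \E_{i,j}^* = X_{jj}\,\E_{i,i}$ and the block structure it forces. The only points requiring care are the normalization bookkeeping---the $\sqrt n$ scaling and the tuple length $dn$ must cancel to reproduce exactly the normalized adjacency matrix $A$, not some rescaling of it---and the justification for passing from singular values to eigenvalues, which relies on the self-adjointness of $\Lambda_{\vB_G}$ for undirected regular $G$.
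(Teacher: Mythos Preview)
The paper does not give its own proof of this statement; it is quoted from \cite{quasirandom} and used as a black box. So there is nothing in the paper to compare your argument against. Your computation of $\Phi_{\vB_G}$ is correct and is exactly the natural approach: the identity $\E_{i,j}X\E_{i,j}^* = X_{jj}\E_{i,i}$ immediately gives the block structure $\Phi_{\vB_G}\cong A\oplus 0$ on $\cD\oplus\cD^\perp$, hence $\Lambda_{\vB_G}\cong L\oplus I_{n^2-n}$, and self-adjointness and positive semidefiniteness follow as you say.

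There is, however, a genuine gap in your final sentence. The multiset of singular values of $\Lambda_{\vB_G}$ is the eigenvalues of $L$ together with $n^2-n$ copies of $1$, so the second-smallest element is $\min(\lambda(G),1)$, not $\lambda(G)$ in general. For instance, for $G=K_3$ the eigenvalues of $L$ are $\{0,\tfrac32,\tfrac32\}$, so $\lambda(K_3)=\tfrac32$, while the full multiset for $\Lambda_{\vB_{K_3}}$ is $\{0,1,1,1,1,1,1,\tfrac32,\tfrac32\}$ and its second-smallest element is $1$. Thus, with the paper's literal definition of $\lambda(\vB)$ as the second-smallest singular value of $\Lambda_\vB$, your argument yields $\lambda(\vB_G)=\min(\lambda(G),1)$. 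The identity $\lambda(\vB_G)=\lambda(G)$ holds precisely when $\lambda(G)\le 1$, i.e.\ when the second-largest eigenvalue of the normalized adjacency matrix is nonnegative. You should either add this hypothesis, or check how \cite{quasirandom} normalizes its definitions and reconcile the two.
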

We prove analogous results for dimension expansion and dimension edge expansion.
\begin{theorem}\label{thm:graphical-tuple-preserves}
    For every $d$-regular graph $G$, $\mu(G) = \mu(\vB_G)$ and $h(G) = h_D(\vB_G)$.
\end{theorem}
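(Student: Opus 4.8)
The plan is to prove both equalities by the same two–step scheme. For the inequalities $\mu(\vB_G)\le\mu(G)$ and $h_D(\vB_G)\le h(G)$ I would simply restrict the minima defining $\mu(\vB_G)$ and $h_D(\vB_G)$ to coordinate subspaces $V=\langle e_i:i\in W\rangle$ with $1\le|W|\le n/2$: a direct calculation (in the spirit of the discussion in \cref{subsec:dim_exp}) shows that on such a $V$ the dimension–expansion ratio equals $|\partial_{out}(W)|/|W|$ and the dimension–edge–expansion ratio equals $|\partial W|/(d|W|)$, so minimizing over $W$ recovers $\mu(G)$ and $h(G)$. All the content is in the reverse inequalities — that a general subspace $V$ with $1\le\dim V\le n/2$ cannot beat the coordinate subspaces.

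The device I would use is to replace $V$ by a \emph{support set}: a set $U\subseteq[n]$ with $|U|=\dim V$ such that the coordinate projection onto $\langle e_i:i\in U\rangle$ is injective on $V$. Such a $U$ exists (any matrix with row span $V$ has $\dim V$ independent columns), it automatically satisfies $\{i:e_i\in V\}\subseteq U\subseteq\supp V$, and $\langle e_i:i\in U\rangle$ is a legal competitor in the graph optima since $1\le|U|\le n/2$. Two elementary computations with $\vB_G$ then drive the comparison. First, from $(\sqrt n\,\E_{i,j})v=\sqrt n\,v_j\,e_i$ one gets, for every subspace $V$, that $\vB_G(V)=\langle e_i:i\in N(\supp V)\rangle$ where $N(\cdot)$ is the neighbourhood; in particular $\vB_G(V)$ is a coordinate subspace, and since $U\subseteq\supp V$ it contains $\langle e_i:i\in N(U)\rangle=\vB_G(\langle e_i:i\in U\rangle)$. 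Second, writing $Z=\{i:e_i\in V\}$, a count of the elementary matrices gives $\sum_{(i,j)}\rk\bigl((\sqrt n\,\E_{i,j})|_{V^\perp,V}\bigr)=d\,|\supp V\setminus Z|+e(Z,[n]\setminus\supp V)$, where the sum is over oriented edges of $G$ and $e(\cdot,\cdot)$ counts edges between vertex sets; for a coordinate subspace this is just $|\partial W|$.

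The crucial point — and the step that needs care — is that injectivity of the projection onto $U$ on $V$ forces every vector of $V$ supported off $U$ to vanish, hence killing the coordinates outside $U$ is \emph{still} injective on $V\cap\langle e_i:i\in N(U)\rangle$, so $\dim\bigl(V\cap\langle e_i:i\in N(U)\rangle\bigr)\le|N(U)\cap U|$. This yields
\[
\dim\!\bigl(V+\vB_G(V)\bigr)-\dim V\ \ge\ \dim\!\bigl(V+\langle e_i:i\in N(U)\rangle\bigr)-\dim V\ =\ |N(U)|-\dim\!\bigl(V\cap\langle e_i:i\in N(U)\rangle\bigr)\ \ge\ |\partial_{out}(U)|,
\]
so the dimension–expansion ratio of $V$ is at least $|\partial_{out}(U)|/|U|\ge\mu(G)$. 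The parallel edge statement $d\,|\supp V\setminus Z|+e(Z,[n]\setminus\supp V)\ge|\partial U|$ follows by expanding both sides into degree sums of $G$, after which the difference is a manifestly nonnegative combination of edge counts; dividing by $d\dim V=d|U|$ shows the dimension–edge–expansion ratio of $V$ is at least $|\partial U|/(d|U|)\ge h(G)$. The reason this is not entirely routine is exactly the choice of $U$: the obvious candidate $U=\supp V$ is useless once $|\supp V|>n/2$, and it is the passage to a support set of the right size $\dim V\le n/2$, together with its injectivity property, that makes the argument go through uniformly.
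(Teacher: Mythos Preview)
Your proposal is correct. For the edge-expansion half it is essentially the paper's argument: your ``support set'' $U$ is exactly the paper's set $W$ (a choice of $\dim V$ linearly independent rows of $T_V$, satisfying $Z\subseteq U\subseteq\supp V$ and hence $[n]\setminus U\subseteq\supp(V^\perp)$), and both proofs rest on the characterization $\rk(\E_{i,j}|_{V^\perp,V})=1\iff i\in\supp(V^\perp),\ j\in\supp(V)$. The only difference is that the paper lower-bounds the rank sum directly by $|\partial W|$ (each cut edge visibly contributes $1$), while you first evaluate the sum exactly and then compare --- a slightly longer but equally valid route.

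For the vertex-expansion half the two arguments share the same device (your $U$ plays the role of the paper's $\pi(V)$, the set of pivot indices of a basis of $V$) but diverge in execution. The paper follows the Dvir--Shpilka ``monotone map'' template, pushing the pivot function $\pi$ through sums and through each $\E_{i,j}$ to obtain $\pi(V+\vB_G(V))\supseteq\pi(V)\cup\partial_{out}(\pi(V))$. You instead do a direct dimension count, using that $\vB_G(V)\supseteq\langle e_i:i\in N(U)\rangle$ and that injectivity of the $U$-projection forces $\dim\bigl(V\cap\langle e_i:i\in N(U)\rangle\bigr)\le|N(U)\cap U|$. Your version is a little more self-contained; the paper's plugs into the existing monotone-expander framework and would generalize more readily to other graphical constructions.
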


In contrast, no such equality holds for quantum edge expansion. In fact, it fails already for the simplest possible graph, consisting of two vertices and a single edge.
\begin{proposition}\label{prop:hq-k2}
    $h(K_2) =1$, but $h_Q(\vB_{K_2})\leq\frac 12$.
\end{proposition}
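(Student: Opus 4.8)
The plan is to unwind both definitions directly; the statement is small enough that everything collapses to a one-line computation. For the first equality, $K_2$ has $n = 2$ vertices and is $1$-regular, so in the definition \eqref{eq: graph edge expansion} the only admissible set is a single vertex $W$, and then $\partial W$ consists of the unique edge of $K_2$, giving $h(K_2) = \frac{|\partial W|}{d|W|} = \frac{1}{1\cdot 1} = 1$.

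For the second inequality I first unwind the definition of the graphical matrix tuple. Since $K_2$ has the single edge $\{1,2\}$, we have $\vB_{K_2} = (\sqrt 2\,\E_{1,2},\ \sqrt 2\,\E_{2,1}) \in \M(2,\bC)^2$; note this is a tuple of length $2|E(K_2)| = 2$, and it is this value that plays the role of $d$ in the normalization $\tfrac1d$ of $\Phi_{\vB_{K_2}}$. Because $n = 2$, every admissible subspace $V$ in \eqref{eq: quantum edge expansion} is a line, so it suffices to exhibit one line $V$ with $\langle I_2 - P_V, \Phi_{\vB_{K_2}}(P_V)\rangle \le \tfrac12$. I take $V = \linspan\{v\}$ with $v = \tfrac1{\sqrt2}(1,1)^{\mathrm t}$, so that $P_V = vv^*$.

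The key step is then the computation of $\Phi_{\vB_{K_2}}(P_V)$. Writing $v = (v_1,v_2)^{\mathrm t}$, one checks that $(\sqrt2\,\E_{1,2})P_V(\sqrt2\,\E_{1,2})^* = 2\,\diag(|v_2|^2, 0)$ and $(\sqrt2\,\E_{2,1})P_V(\sqrt2\,\E_{2,1})^* = 2\,\diag(0, |v_1|^2)$, hence $\Phi_{\vB_{K_2}}(P_V) = \diag(|v_2|^2, |v_1|^2)$, which for our choice $|v_1|^2 = |v_2|^2 = \tfrac12$ equals $\tfrac12 I_2$. Since $I_2 - P_V$ is the rank-one orthogonal projection onto $V^\perp$, we get $\langle I_2 - P_V, \Phi_{\vB_{K_2}}(P_V)\rangle = \tfrac12\langle I_2 - P_V, I_2\rangle = \tfrac12\trace(I_2 - P_V) = \tfrac12(2-1) = \tfrac12$. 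Dividing by $\dim V = 1$ yields $h_Q(\vB_{K_2}) \le \tfrac12$. (In fact, minimizing $\langle I_2 - P_V, \Phi_{\vB_{K_2}}(P_V)\rangle = |v_1|^4 + |v_2|^4$ over unit vectors $v$ shows $h_Q(\vB_{K_2}) = \tfrac12$ exactly, though only the inequality is needed.)

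There is no real obstacle here — it is a direct calculation. The only thing to be careful about is the bookkeeping of the two distinct roles of ``$d$'': the degree of $K_2$ is $1$, whereas the length of the tuple $\vB_{K_2}$, which is what enters the definition of $\Phi_{\vB_{K_2}}$, is $2$. Confusing the two would throw the final constant off by a factor of two.
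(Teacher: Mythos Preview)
Your proof is correct and follows essentially the same approach as the paper: both use the test line $V$ spanned by $\tfrac{1}{\sqrt 2}(1,1)^{\mathrm t}$, compute $\Phi_{\vB_{K_2}}(P_V)=\tfrac12 I_2$, and obtain $\langle I_2-P_V,\Phi_{\vB_{K_2}}(P_V)\rangle=\tfrac12$. Your version is slightly more informative in that it parametrizes over general unit vectors before specializing (recovering the paper's remark that $h_Q(\vB_{K_2})=\tfrac12$ exactly) and explicitly flags the distinction between the graph degree and the tuple length.
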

Using the same proof technique, one can check that $h_Q(\vB_{K_n}) < h(K_n)$ for all $n \geq 2$. The fact that $h(G)$ and $h_Q(\vB_G)$ are different in general may shed further light on our main results in \cref{thm:main-negative,thm:main-relationships}, that dimension edge expansion and quantum edge expansion are different. However, we stress that these simple examples have no direct bearing on \cref{thm:main-negative,thm:main-relationships}, as $\vB_G$ is not a matrix tuple of constant length, which is the setting in which \cref{thm:main-negative,thm:main-relationships} are interesting.

\begin{remark}
    We include the $\sqrt n$ normalization in the definition of $\vB_G$ in order to obtain a doubly stochastic tuple. When working with a non-$d$-regular graph or over a field other than $\bC$, one should omit this normalization; with this definition, \cref{thm:graphical-tuple-preserves} holds for any graph $G$ and over any field.
\end{remark}

\subsection{Notation and basic definitions}\label{sec:prel}

For $n\in \N$, $[n]\coloneqq \{1, \dots, n\}$. 
Throughout, we work with finite-dimensional vector spaces over a field $\F$. 
The elements of $\F^n$ are length-$n$ \emph{column} vectors over 
$\F$. 
The linear space of 
$n'\times n$ matrices over $\F$ is denoted $\M(n'\times n,\F)$. For simplicity we shall write 
$\M(n,\F)$ for 
$\M(n\times n,\F)$. We use $\E_{i, j}\in \M(n'\times n, \F)$ to denote 
the $n'\times n$ elementary matrix where the $(i, j)$th entry is $1$, and other entries are $0$. 

We use $\langle x, y\rangle$ to denote inner products. In particular, for $x=(x_1, \dots, x_n)^t,y=(y_1, \dots, y_n)^t\in\bC^n$, their inner product is
$
\langle x,y\rangle\coloneqq \sum_{i=1}^n \overline{x_i}y_i.
$
For every $p\in[1,\infty)$, the $L_p$-norm of $x\in\bC^n$ 
is defined as 
\[
\norm{x}_{L_p}\coloneqq \left(\sum^n_{i=1}|x_i|^p\right)^{\frac{1}{p}}.
\]
Define $\norm{x}_{L_\infty}\coloneqq \max_i|x_i|$. 

We use $\trace(\cdot)$ to denote the usual trace function on $\M(n,\F)$. For $X,Y\in \M(n,\bC)$, their inner product is $\langle X,Y\rangle\coloneqq \trace(X^*Y)$.
For every $p\in[1,\infty)$, the Schatten-$p$ norm of $X\in \M(n,\bC)$ is defined as
\[
\norm{X}_{S_p}\coloneqq \left(\trace\left[(X^*X)^{\frac{p}{2}}\right]\right)^{\frac{1}{p}}.
\]
Define $\norm{X}_{S_\infty}\coloneqq \sup\{|\langle 
x,Xy\rangle|:~\norm{x}_{L_2},\norm{y}_{L_2}\leq 1\}$, which is the operator norm 
of $X$. 

\subsection{Organization}
In \cref{sec:dim-not-quantum}, we prove \cref{thm:main-negative}, showing that a small fractional power of a dimension expander yields another dimension expander which is an arbitrarily poor quantum expander. In \cref{sec:quantum-to-dim}, we prove \cref{thm:main-relationships}, showing that dimension and dimension edge expansion are equivalent, and that quantum edge expansion implies dimension edge expansion. In \cref{sec:connections}, we prove \cref{thm:graphical-tuple-preserves,prop:hq-k2}, relating graph expansion and linear-algebraic expansion of the associated graphical matrix tuple. We conclude in \cref{sec:conclusion} with some open problems and concluding remarks.

\section{Dimension expanders that are not quantum expanders}\label{sec:dim-not-quantum}
In this section, we prove \cref{thm:main-negative}. Our main technical result is the following, which says that given an arbitrary unitary matrix tuple, any sufficiently small fractional power of it has equal or greater dimension expansion. As it is easy to show that a small fractional power of any unitary matrix tuple has poor quantum expansion, it becomes straightforward to deduce \cref{thm:main-negative}, as we show below.

Recall that given a unitary matrix $U$, there exists a Hermitian matrix $H$ such that $U = e^{iH}$. Using this, we can define arbitrary powers of $U$: for any $s>0$, we define $U^s \coloneqq e^{isH}$. Note that this notion depends on the choice of $H$, and a different choice of $H$ may yield a different outcome, but this will not matter for us. So for any unitary matrix $U$, we fix a choice of Hermitian $H$ such that $U=e^{iH}$, and then define powers of $U$ with respect to this choice.
\begin{theorem}\label{thm:exp-keeps-dim-exp}
    Let $\vU=(U_1,\dots,U_d)\in \mathrm{U}(n)^d$ be a unitary tuple of matrices. There exists some $s_0>0$ such that for all $s \in (0,s_0)$, the unitary matrix tuple $\vU^s=(U_1^s,\dots,U_d^s)\in\mathrm{U}(n)^d$ satisfies $\mu(\vU^s) \geq \mu(\vU)/d$.
\end{theorem}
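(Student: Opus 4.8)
The plan is to argue by a compactness/limiting argument. Fix the unitary tuple $\vU = (U_1,\dots,U_d)$, and write $U_j = e^{iH_j}$ for the chosen Hermitian matrices $H_j$. For each $s > 0$ consider the tuple $\vU^s = (e^{isH_1},\dots,e^{isH_d})$. The key observation is that for any fixed subspace $V \leq \bC^n$, the quantity $\dim(V + \vU^s(V))$ can only \emph{drop} as $s \to 0^+$: indeed, to first order, $e^{isH_j} v \approx v + is H_j v$, so the image $\vU^s(V)$ "approaches" the subspace $V + \langle H_1 V,\dots,H_d V\rangle$, which is independent of $s$. More precisely, I would show that there is a threshold $s_0 > 0$ such that for all $s \in (0,s_0)$ and \emph{all} subspaces $V$ simultaneously, $\dim(V + \vU^s(V)) \geq \dim(V + \langle H_1 V,\dots,H_d V\rangle)$ fails in the wrong direction — wait, I need the inequality the other way, so let me reconsider: I want a lower bound on $\mu(\vU^s)$, hence I want $\dim(V + \vU^s(V))$ to be \emph{large}. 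Since dimension is lower-semicontinuous under limits (a limit of subspaces of dimension $\geq k$ can have dimension $< k$, but a subspace that is a small perturbation of one of dimension $k$ still has dimension $\geq k$ — rank doesn't drop under small perturbations), the right statement is: for $s$ small, $\dim(V + \vU^s(V)) \geq \dim(V + \vU^{s'}(V))$ cannot be guaranteed pointwise, but we can compare $\vU^s$ to $\vU^1 = \vU$ via a \emph{continuous deformation}. The cleanest route: the map $V \mapsto \vU^s(V)$ moves $V$ by a path in the Grassmannian as $s$ ranges over $[0,1]$ (reparametrizing), and we want $\mu$ evaluated along this path.

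So here is the approach I would actually commit to. Define $f(s) \coloneqq \mu(\vU^s)$ for $s \in (0,1]$. I claim $\liminf_{s \to 0^+} f(s) \geq \mu(\vU)/d$, which suffices. Suppose not: then there is a sequence $s_k \to 0$ and subspaces $V_k$ with $1 \leq \dim V_k \leq n/2$ witnessing $\mu(\vU^{s_k})$, i.e. $\frac{\dim(V_k + \vU^{s_k}(V_k)) - \dim V_k}{\dim V_k} < \mu(\vU)/d$. By passing to a subsequence, the dimensions $\dim V_k$ stabilize to some fixed $r$ (as there are finitely many values in $[1, n/2]$), and by compactness of the Grassmannian $\Gr(r, \bC^n)$, we may assume $V_k \to V$ for some $V$ with $\dim V = r$, $1 \leq r \leq n/2$. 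Now the crucial point: I need to understand $\lim_k \dim(V_k + \vU^{s_k}(V_k))$. Since $\vU^{s_k} \to I$ entrywise (as $s_k \to 0$), we have $\vU^{s_k}(V_k) \to V$ as well; so naively $\dim(V_k + \vU^{s_k}(V_k)) \to \dim(V + V) = r$, which would force the numerator to $0$ and give a contradiction only if $\mu(\vU) > 0$ — but $\mu(\vU)$ could be $0$, and more importantly, \emph{lower-semicontinuity} says the limit of the dimensions is $\geq \limsup$, not that it equals $r$. Here is the subtlety and the main obstacle: I need to show the dimension $\dim(V_k + \vU^{s_k}(V_k))$ stays \emph{large}, i.e. does not drop below what $\mu(\vU)$ predicts. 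This is false in the crude form just stated, which tells me the limiting argument must be run differently.

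The correct fix, and the heart of the argument, is to run the compactness \emph{globally}: rather than fixing a bad sequence $s_k \to 0$, I would show that the function $s \mapsto \min_{V} \big(\dim(V + \vU^s(V)) - \dim V\big)$, where the min is over subspaces of each fixed dimension $r$, is \emph{continuous from the left at every $s$} and use that the set of "bad" $s$ (where some $V$ witnesses a dimension drop) is relatively closed; combined with the fact that at $s$ with $\vU^s$ having all $U_j^s$ distinct powers generating a "generic enough" tuple the dimension is maximal, one localizes the bad set. Concretely: for each fixed $r$ and each pair of integers $(r, m)$ with $m > r$, the set $\mathcal{B}_{r,m} = \{(s, V) : \dim V = r, \dim(V + \vU^s(V)) \leq m\}$ is closed in $(0,1] \times \Gr(r,\bC^n)$ (dimension $\leq m$ is a closed condition — it's the vanishing of all $(m+1)\times(m+1)$ minors). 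Its projection to the $s$-axis is therefore closed if we also use compactness of the Grassmannian fiber. Now I argue that $1 \in$ the "good" region for the relevant $m$ (namely $m$ realizing $\mu(\vU)$), and that this region, being defined by the complement of a closed set whose $s$-projection is closed, contains an interval $(0, s_0)$ — provided I can rule out a sequence $s_k \to 0$ of bad parameters, which is exactly where I must use the structure of $\vU^s$ near $s = 0$: the tuple $\vU^s$ for small $s$ is, after conjugation and reparametrization, "close to" $\vU^{s_0}$ for the infimal bad $s_0$, but actually the clean statement is that $\mu(\vU^{ts}) \geq \mu(\vU^s)$-type monotonicity... Honestly, the main obstacle is establishing the right semicontinuity statement, and I expect the paper handles it by observing that for $V$ a subspace, $\dim(V+\vU^s(V))$ as a function of $s$ is constant except at finitely many values of $s$ (since it is governed by ranks of matrices whose entries are real-analytic in $s$, via the $e^{isH_j}$ power series), hence equals its generic value for all small $s$, and this generic value dominates the value at $s=1$ — giving $\mu(\vU^s) \geq \mu(\vU)$ for small $s$, in fact with no loss of the factor $1/d$. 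The factor $1/d$ presumably enters to handle the failure of this clean picture when one cannot take $V$ fixed: one passes from $\vU$ to a single "combined" operator and loses a factor $d$ in the dimension count, then applies the analyticity argument to that. I would structure the final write-up as: (i) reduce to the single-operator statement about $W^s$ where $W$ encodes $\sum U_j$-type data, losing the $1/d$; (ii) prove $\mu$ of a real-analytic unitary path is eventually constant as $s \to 0^+$ and equals the generic value; (iii) check the generic value is $\geq$ the value at $s = 1$.
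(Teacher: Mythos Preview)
Your proposal correctly identifies the main ingredients---compactness of the Grassmannian and real-analyticity of $s \mapsto e^{isH_j}$---but it does not close the central gap, and your proposed way to close it is not what the paper does (and does not work as stated).

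The gap is exactly the one you flag: for each \emph{fixed} $V$, analyticity indeed gives that $\dim(V + \vU^s(V))$ equals its generic (maximal) value for all $s$ outside a finite set, hence for all small $s$; but the threshold depends on $V$, and you need uniformity over the Grassmannian. Your suggested fix---``pass from $\vU$ to a single combined operator $W$ encoding $\sum U_j$-type data, losing the $1/d$, then apply analyticity to that''---is not correct: there is no such reduction, and this is not where the $1/d$ comes from. Similarly, your closed-set argument for $\mathcal{B}_{r,m}$ stalls at precisely the point you acknowledge: ruling out bad $s_k \to 0$ is the whole problem, and nothing in your sketch does it.

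The paper's route is different in a crucial way. Rather than tracking the global quantity $\dim(W + \vU^s(W))$, it attaches to each $W$ a \emph{witness}: by pigeonhole there is some index $i$ and some $V \leq W$ with $\dim V \geq \mu r/d$ and $U_i V \cap W = \{0\}$ (this is where the $1/d$ enters). The technical heart (their Lemma~2.2) is then a Taylor-expansion argument showing that for this fixed $V,W,i$ one has $U_i^s V \cap W = \{0\}$ for all sufficiently small $s>0$; this is delicate because $U_i^0 V = V \subseteq W$, so one must analyze which derivative $H_i^k v$ first escapes $W$ and run a reverse induction on that order. Finally, they show this witness $(V,i,\delta)$ can be extended continuously to an open neighborhood of $W$ in the Grassmannian, and then compactness yields a finite subcover and hence a uniform $s_0$. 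The conceptual point you are missing is that one should make the \emph{witness} the object that varies continuously, not the dimension count itself.
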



Assuming \cref{thm:exp-keeps-dim-exp}, we can prove \cref{thm:main-negative}.

\begin{proof}[Proof of \cref{thm:main-negative}]

    We recall the variational definition of quantum expansion: for any doubly stochastic matrix tuple $\vB$, we have that
    \begin{equation}\label{eq:variational-lambda}
        \lambda(\vB) = 1- \max_{\substack{0 \neq X \in \M(n,\bC)\\\trace(X)=0}} \frac{\norm{\Phi_{\vB}(X)}_{S_2}}{\norm X_{S_2}}.
    \end{equation}
    Indeed, we first note that the second-smallest singular value of $\Lambda_{\vB}$ equals one minus the second-largest singular value of $\Phi_{\vB}$. Since the largest eigenvalue of $\Phi_{\vB}$ is $1$, with $I_n$ as both a left and a right eigenvector, we know that the second-largest singular value of $\Phi_{\vB}$ is simply the operator norm of $\Phi_{\vB}$ when restricted to the orthogonal complement of $I_n$. The orthogonal complement of $I_n$ is the set of matrices $X$ with $\trace(X)=0$, and thus the maximum in \eqref{eq:variational-lambda} precisely defines the second-largest singular value of $\Phi_{\vB}$. In particular, by the triangle inequality, we see that
    \begin{equation}
        \lambda(\vB) = \max_{\substack{0 \neq X \in \M(n,\bC)\\\trace(X)=0}} \frac{\norm X_{S_2} - \norm{\Phi_\vB(X)}_{S_2}}{\norm X_{S_2}} \leq \max_{\substack{0 \neq X \in \M(n,\bC)\\\trace(X)=0}} \frac{\norm{X-\Phi_\vB(X)}_{S_2}}{\norm X_{S_2}}. \label{eq:lambda-ub}
    \end{equation}

	It is well-known that for constant $d \in \N$ and $\mu>0$, there exist unitary $(n,d,d\mu)$-dimension expanders for all sufficiently large $n$ (e.g.\ by taking $100$ random $n \times n$ unitary matrices). So fix some $\vU=(U_1,\dots,U_d)\in \mathrm{U}(n)^d$ with $\mu(\vU) \geq d\mu$. By \cref{thm:exp-keeps-dim-exp}, we know that $\vU^s$ is still an $(n,d,\mu)$-dimension expander for any sufficiently small $s$. Given $\varepsilon>0$, we claim that $\lambda(\vU^s)< \varepsilon$ for sufficiently small $s$.

    Note that as $s$ tends to $0$, $\vU^s$ approaches the identity tuple $(I_n,\dots,I_n)$. More precisely, for any $\varepsilon>0$ and any $i\in[d]$, there exists a sufficiently small $s$ such that 
    \begin{equation}\label{eq:close-to-id}
         \norm{I_n - U_i^s}_{S_2}<\frac \varepsilon 2 \qquad \text{ and } \qquad \norm{I_n - (U_i^s)^*}_{S_2} < \frac \varepsilon 2.
    \end{equation}
    Note that the two conditions are actually equivalent, as the Schatten norm is invariant under taking conjugate transpose.
    Now fix $s$ sufficiently small so that \eqref{eq:close-to-id} holds for all $i \in [d]$, and so that $\mu(\vU^s) \geq \mu(\vU)/d \geq \mu$. It remains to prove that for this choice of $s$, we have $\lambda(\vU^s)<\varepsilon$. 

    Fix some $0 \neq X \in \M(n,\bC)$ with $\trace(X) \neq 0$. By our choice of $s$, we have that
    \begingroup
    \allowdisplaybreaks
    \begin{align}
    \frac{\|X-\Phi_{\vU^s}(X)\|_{S_2}}{\norm{X}_{S_2}}
    &=\frac{\norm{\sum_{i=1}^{d}(X-U_i^s X(U_i^s)^*)}_{S_2}}{d\norm{X}_{S_2}} \notag\\
    &\leq\sum_{i=1}^{d}\frac{\norm{X(I_n-(U_i^s)^*)+(I_n-U_i^s)X(U_i^s)^*}_{S_2}}{d\norm{X}_{S_2}} \label{eq:triangle-1}\\
    &\leq\sum_{i=1}^{d}\frac{\norm{X(I_n-(U_i^s)^*)}_{S_2}+\norm{(I_n-U_i^s)X(U_i^s)^*}_{S_2}}{d\norm{X}_{S_2}}\label{eq:triangle-2}\\
    &=\sum_{i=1}^{d}\frac{\norm{X(I_n-(U_i^s)^*)}_{S_2} + \norm{(I_n-U_i^s)X}_{S_2}}{d\norm{X}_{S_2}}\label{eq:unitary-invar}\\
    &\leq \sum_{i=1}^d \frac{\norm{I_n-(U_i^s)^*}_{S_2} + \norm{I_n-U_i^s}_{S_2}}{d} \label{eq:submult}\\
    &<\varepsilon, \label{eq:assumption}
    \end{align}
    \endgroup
    where \eqref{eq:triangle-1} and \eqref{eq:triangle-2} use the triangle inequality, \eqref{eq:unitary-invar} uses the fact that Schatten norms are invariant under unitary multiplications, \eqref{eq:submult} uses the submultiplicativity of the Schatten norm, and \eqref{eq:assumption} uses our assumption that \eqref{eq:close-to-id} holds for all $i \in [d]$. By taking the maximum over all such $X$, we conclude that for sufficiently small $s$, the tuple $\vU^s$ is an $(n,d,\mu)$-dimension expander, but $\lambda(\Phi_{\vU^s}) < \varepsilon$.
\end{proof}

In order to prove \cref{thm:exp-keeps-dim-exp}, we will need the following lemma. It says that for a \emph{fixed} subspace $W \leq \bC^n$, taking a small power of a matrix $U$ does not ``ruin the dimension expansion''. Namely, if there is a subspace $V \leq W$ so that $UV \cap W = \{0\}$, then $U^s V \cap W=\{0\}$ for all sufficiently small $s$. This almost suffices to prove \cref{thm:exp-keeps-dim-exp}, except that the ``sufficiently small'' condition depends on $V$ and $W$; in order to obtain an absolute bound that is independent of these choices, we will use a compactness argument.
\begin{lemma}\label{lem:small-power}
    Let $U$ be a unitary matrix and let $V \leq W \leq \bC^n$ be subspaces. Suppose that $Uv \notin W$ for all non-zero $v \in V$. There exists some $\delta>0$ so that $U^s v \notin W$ for all $0<s<\delta$.
\end{lemma}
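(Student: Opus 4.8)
The plan is to diagonalize the one-parameter unitary family $U^s = e^{isH}$ and track, for each non-zero $v\in V$, how the quantity measuring "$U^sv$ lies in $W$" behaves as a real-analytic function of $s$. Concretely, fix $H$ Hermitian with $U=e^{iH}$, and let $P$ denote the orthogonal projection onto $W^{\perp}$. For $v\in V$, the condition $U^sv\notin W$ is equivalent to $P U^s v\neq 0$, i.e.\ to $\norm{PU^sv}^2_{L_2}>0$. I would like to reduce to a single scalar constraint; for this, note that $\norm{PU^sv}^2$ is, for each fixed $v$, a function of the form $\sum_{j,k} c_{jk}(v)\, e^{is(\theta_j-\theta_k)}$ where the $\theta_j$ are the eigenvalues of $H$, hence it extends to a real-analytic (indeed almost-periodic) function of $s\in\R$. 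A real-analytic function that is not identically zero has only isolated zeros; the hypothesis $Uv\notin W$ says this function is non-zero at $s=1$, hence it is not identically zero, hence its zero set near $0$ is discrete. This already handles a single $v$, but the issue is uniformity over all $v\in V$.

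To get a $\delta$ that works simultaneously for every non-zero $v\in V$, I would run a compactness argument over the projectivization (equivalently, the unit sphere) of $V$. Define $f\colon S(V)\times[0,1]\to\R_{\ge 0}$ by $f(v,s)=\norm{PU^sv}^2_{L_2}$, where $S(V)$ is the unit sphere of $V$; this is continuous (jointly, since $s\mapsto U^s$ is continuous in operator norm and $P$ is fixed). The hypothesis gives $f(v,1)>0$ for all $v\in S(V)$. The natural move is: for each $v\in S(V)$, pick $\delta_v>0$ and an open neighborhood $N_v\ni v$ in $S(V)$ such that $f(v',s)>0$ for all $v'\in N_v$ and all $s\in(0,\delta_v)$ — but this is precisely the delicate point, since the set of $s\in(0,1)$ on which $f(v,\cdot)$ vanishes could a priori have $0$ as an accumulation point unless we use analyticity. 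So I would instead argue as follows: by the analyticity observation above, for each fixed $v$ the set $Z_v=\{s\in[0,1]: f(v,s)=0\}$ is finite (as $f(v,\cdot)$ is real-analytic on a neighborhood of $[0,1]$ and not identically zero), so in particular $\alpha(v):=\min(Z_v\cap(0,\infty))\in(0,1]$, with the convention $\alpha(v)=1$ if $Z_v\cap(0,1]=\varnothing$. It remains to show $\inf_{v\in S(V)}\alpha(v)>0$.

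For the infimum claim I would argue by contradiction: if $\inf_v\alpha(v)=0$, pick $v_n\in S(V)$ with $\alpha(v_n)\to 0$, so there are $s_n\in(0,\alpha(v_n)]$ — hence $s_n\to 0$ — with $f(v_n,s_n)=0$. By compactness of $S(V)$ pass to a subsequence with $v_n\to v_\infty\in S(V)$; then by joint continuity $f(v_\infty,0)=\lim f(v_n,s_n)=0$. But $f(v_\infty,0)=\norm{Pv_\infty}^2_{L_2}$, and since $v_\infty\in V\le W$ we have $Pv_\infty=0$ automatically — so this is not yet a contradiction. The fix is that we also know more: actually I should track the \emph{order of vanishing} at $0$. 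Since $v_\infty\in V$ and $Uv_\infty\notin W$, the analytic function $g(s):=f(v_\infty,s)$ vanishes at $s=0$ but is not identically zero, so it has a finite order of vanishing $m\ge 1$ there, meaning $g(s)=s^m h(s)$ with $h(0)\ne 0$; choosing a small closed disc around $0$ on which $h$ (extended complex-analytically, or just by Taylor remainder on the real line) stays bounded away from $0$ and the corresponding estimate $g(s)\ge c\,s^m$ for $0<s\le\delta_\infty$, and then using that near $(v_\infty,0)$ the coefficient functions $c_{jk}(v)$ vary continuously so that $f(v,s)\ge \tfrac{c}{2}s^m$ for $v$ near $v_\infty$ and $0<s\le\delta_\infty$, we contradict $f(v_n,s_n)=0$ for large $n$. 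Thus $\delta:=\inf_v\alpha(v)>0$ works (or a small constant multiple of it, to be safe with the neighborhoods).

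\medskip
\noindent\textbf{Main obstacle.} The genuinely delicate step is extracting \emph{uniform} control of the vanishing near $s=0$ from the pointwise real-analyticity: the neighborhoods $N_v$ on which the lower bound $f(v',s)\ge c'\,s^{m}$ holds shrink as the leading coefficient $c_{jk}(v)$ degenerates, and one must make sure the order $m$ does not blow up along a sequence $v_n\to v_\infty$. The cleanest way I expect to handle this is to phase it entirely through the compact contradiction argument above, so that one never needs a single $\delta$ chosen "by hand" as a function of $v$ — the only analytic input used is that for the \emph{limit} vector $v_\infty$ the function $g$ has some finite vanishing order, which then automatically propagates to a neighborhood by continuity of finitely many coefficients. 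An alternative, possibly slicker route that sidesteps the order-of-vanishing bookkeeping: replace "$U^sv\notin W$" by the $\{0,1\}$-valued statement, note $\dim(\text{span}(V\cup U^sV)\cap W^{\perp})$ is a lower-semicontinuous-type quantity in $s$ that is $\ge$ its value at $s=1$ for all small $s$ by a determinantal/minor argument (some maximal minor of the matrix of $P U^s|_V$ is a non-zero real-analytic function of $s$ because it is non-zero at $s=1$), and then a single minor gives a single analytic scalar whose non-vanishing near $0$ is immediate from isolated zeros — this trades the sphere compactness for a choice of minor and is probably the shortest path.
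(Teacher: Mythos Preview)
Your main compactness argument has a real gap at the propagation step. You assert that since $g(s)=f(v_\infty,s)\ge c\,s^m$ for $0<s\le\delta_\infty$, continuity of the Taylor coefficients gives $f(v,s)\ge\tfrac{c}{2}s^m$ for $v$ near $v_\infty$ on the same interval. But continuity of the coefficients does not yield this: for $v\ne v_\infty$ the expansion of $f(v,\cdot)$ typically acquires lower-order terms with small coefficients, and in the regime where $s$ is comparable to an appropriate power of $\|v-v_\infty\|$ those terms compete with the $s^m$ term. For instance, with $n=4$, $W=V=\langle e_1,e_2\rangle$, and $H$ the tridiagonal matrix with $1$'s on the sub- and super-diagonals, one has $v_\infty=e_1$ with $m=4$, yet for $v=e_1-i\eta e_2$ a short computation gives $f(v,2\eta)\asymp\eta^6\ll\eta^4\asymp (c/2)(2\eta)^m$. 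Your justification uses only the local data at $v_\infty$, and this local data alone cannot rule out such behavior; the one-shot contradiction therefore does not close. What is needed is exactly the ``order-of-vanishing bookkeeping'' you hoped to avoid, and this is what the paper does: it sets $V_k=\{v\in V:H^jv\in W\text{ for all }j\le k\}$, a finite decreasing chain $V=V_0\ge V_1\ge\dotsb\ge V_K=\{0\}$, and proves the claim for unit vectors in $V_{k-1}$ by reverse induction on $k$. At each step one splits $v\in V_{k-1}$ as $u+w$ with $w\in V_k$: if $\|u\|$ is small one invokes the inductive hypothesis by continuity, while if $\|u\|$ is bounded below by some $\varepsilon_k$ a Taylor estimate applies because $\|PH^kv\|$ is bounded away from zero on that compact set. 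Your limit $v_n\to v_\infty$ is trying to collapse this induction into one step, and it breaks precisely where the order of vanishing jumps.

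Your alternative determinantal route, on the other hand, is correct and is strictly shorter than the paper's argument. Fixing bases, the hypothesis says the $(n-\dim W)\times\dim V$ matrix $M(s)=R\,U^s\,T$ has full column rank at $s=1$, so some $\dim V\times\dim V$ minor $\Delta(s)$ is non-zero there. Each entry of $M(s)$ is a finite sum $\sum_j c_j e^{is\theta_j}$ and hence entire in $s$, so $\Delta$ is real-analytic and not identically zero; its zero at $s=0$ (forced by $RT=0$ since $V\le W$) is therefore isolated, and $\Delta(s)\ne 0$ on some $(0,\delta)$. This yields full column rank of $M(s)$, i.e.\ $U^sv\notin W$ for all non-zero $v\in V$ and all $s\in(0,\delta)$, with no sphere compactness or stratification needed. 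The paper's inductive proof is longer but has the mild advantage of making the Taylor structure explicit.
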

Here, as before, the matrix power $U^s$ is defined by fixing a Hermitian matrix $H$ so that $U=e^{iH}$, and then defining $U^s \coloneqq e^{isH}$.
At a high level, \cref{lem:small-power} is proved as follows. For a fixed non-zero $v \in V$, we know that $Uv \notin W$. Consider the ``trajectory'' $U^s v$, viewed as a function of $s$. We know that this trajectory eventually leaves $W$ (as $Uv \notin W$); let us assume for the moment that in fact, the first derivative of the trajectory has a non-zero component in $W^\perp$. This means that for sufficiently small $s$, the trajectory $U^s v$ is well-approximated by a line which does not lie in $W$, and thus for some small but positive amount of time, this trajectory stays outside $W$. By bounding the second derivative of the trajectory, this argument can be made rigorous. 

Unfortunately, it need not be the case that the first derivative of the trajectory lies outside $W$, so one cannot just apply the argument as sketched above. However, since $Uv \notin W$ for all $v \in V$, we see that for each $v \in V$, \emph{some} derivative of the trajectory lies outside $W$. By performing a backwards induction on the order of the first derivative which lies outside $W$ (and at each step repeating the argument above), one can prove \cref{lem:small-power}. Here are the details.
\begin{proof}[Proof of \cref{lem:small-power}]
    Fix a non-zero vector $v \in V$. For $s \geq 0$, we write $U^s v$ in terms of the Taylor expansion of $e^{isH}$, i.e.
    \[
        U^s v = e^{isH} v = \sum_{j=0}^\infty \frac{1}{j!} (isH)^j v= \sum_{j=0}^\infty \frac{(is)^j}{j!} H^j v.
    \]
    Note that if $H^k v\in W$ for all $k \geq 0$, then $U^s v \in W$ for all $s \geq 0$, and in particular $Uv \in W$, which contradicts our assumption. Therefore, we see that for every $v \in V$, there is some integer $k \geq 0$ so that $H^j v \in W$ for all $0 \leq j \leq k$, but $H^{k+1}v \notin W$.

    For $k \geq 0$, let $V_k \leq V$ be the set of $v \in V$ so that $H^j v \in W$ for all $0 \leq j \leq k$. Note that $V_k$ is a subspace of $V$, and they are nested as $V=V_0 \geq V_1 \geq V_2 \geq \dotsb$. By the discussion above, we know that $\bigcap_{k \geq 0} V_k = \{0\}$. Additionally, since $V$ is finite-dimensional, we see that this chain eventually stabilizes at $0$, i.e., there is some $K$ so that $V_{K-1} \neq \{0\}$ but $V_{K}=\{0\}$. We will now prove the following claim by induction on $k$, starting at $k=K$ and working down to $k=1$.
    \begin{claim}\label{claim:induction}
        For every $1 \leq k \leq K$, there exists some $\delta_k>0$ so that the following holds. For every $v \in V_{k-1}$ with $\norm{v}_{L_2}=1$, we have that $U^s v \notin W$ for all $s \in (0,\delta_k)$.
    \end{claim}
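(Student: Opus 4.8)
Rather than run the backwards induction directly, I would prove the formally stronger statement that for each fixed $k$ there is a $\delta_k>0$ with $V_{k-1}\cap U^{-s}W=\{0\}$ for all $s\in(0,\delta_k)$; since $U^s$ is invertible this is precisely the assertion that $U^sv\notin W$ for every nonzero (in particular every unit) $v\in V_{k-1}$, which is \cref{claim:induction}. The key observation is that, for a \emph{single} value of $s$, the existence of a nonzero $v\in V_{k-1}$ with $U^sv\in W$ is detected by the vanishing of a determinant that is a \emph{real-analytic} function of $s$, and a real-analytic function that is not identically zero has isolated zeros.

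Here is the plan in more detail. First, fix a matrix $T\in\M(n\times p,\bC)$ whose columns form a basis of $V_{k-1}$ (so $p=\dim V_{k-1}$; we may assume $p\ge1$, else the claim is vacuous) and a matrix $R\in\M(q\times n,\bC)$ whose rows form a basis of $W^\perp$, where $q=n-\dim W$. For a nonzero $v=Tx\in V_{k-1}$ one has $U^sv\in W$ iff $RU^sv=0$ iff $RU^sTx=0$, so such a $v$ exists iff $RU^sT\in\M(q\times p,\bC)$ is not injective, i.e.\ $\rank(RU^sT)<p$. I would then set
\[
    g(s)\coloneqq\det\big((RU^sT)^*(RU^sT)\big)=\det\big(T^*U^{-s}R^*RU^sT\big),
\]
using $(U^s)^*=U^{-s}$ (which holds since $H$ is Hermitian and $s$ is real); this is a $p\times p$ Gram determinant, so $g(s)\ge0$, with $g(s)=0$ exactly when $\rank(RU^sT)<p$.

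Next I would check three things about $g$. (i) It is real-analytic on all of $\R$: the entries of $U^{s}=e^{isH}$ and $U^{-s}=e^{-isH}$ are entire in $s$, and $g(s)$ is a polynomial in the entries of $T^*U^{-s}R^*RU^sT$. (ii) It is not identically zero: by hypothesis no nonzero $v\in V\supseteq V_{k-1}$ satisfies $Uv\in W$, so $RUT$ is injective and $g(1)\ne0$. (iii) $g(0)=\det\big((RT)^*(RT)\big)=0$, because the columns of $T$ lie in $V_{k-1}\subseteq W=\ker R$, so $RT=0$. From (i) and (ii), the zero set of $g$ has no accumulation point in $\R$ (identity theorem for real-analytic functions); by (iii), $0$ is a zero, hence an isolated one, so there is $\delta_k>0$ with $g(s)\ne0$ for all $s\in(0,\delta_k)$. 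For such $s$, $\rank(RU^sT)=p$, so no nonzero $v\in V_{k-1}$ lies in $U^{-s}W$, which gives \cref{claim:induction}. Note that this argument treats each $k$ independently and uses neither the subspaces $V_k$ nor the integer $K$; taking $k=1$ it already proves \cref{lem:small-power} in full.

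I expect the only real work to be the (routine) verification in step (i) that the relevant determinant is genuinely real-analytic in $s$, together with the standard appeal to the identity theorem at the end. If one prefers to stay with the inductive Taylor-expansion approach sketched before the claim, the main obstacle is that the smallest index $j=j(v)$ with $H^jv\notin W$ is not continuous in $v$---it jumps upward precisely along the subspaces $V_k$---so the naive ``leading term'' estimate for how long $U^sv$ stays outside $W$ degenerates to $0$ as $v$ approaches $V_k$. Repairing this requires combining a compactness argument on the unit sphere of $V_{k-1}\ominus V_k$ (on which $\|P_{W^\perp}H^kw\|$ is bounded below) with the inductive hypothesis on $V_k$, through a case split according to the size of the $V_{k-1}\ominus V_k$-component of $v$ relative to $s$; the analytic argument above sidesteps this.
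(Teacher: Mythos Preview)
Your argument is correct, and it is genuinely different from the paper's proof. The paper runs the backward induction on $k$ that is sketched just before the claim: for the base case $k=K$ it Taylor-expands $PU^sv$ about $s=0$, uses compactness of the unit sphere in $V_{K-1}$ to bound the leading coefficient $\|PH^Kv\|$ uniformly from below, and then controls the remainder; the inductive step combines the hypothesis on $V_k$ (via a continuity/open-neighborhood argument) with the same Taylor estimate on the part of $V_{k-1}$ that is $\varepsilon_k$-far from $V_k$. Your route bypasses all of this by packaging the condition ``$V_{k-1}\cap U^{-s}W\neq\{0\}$'' as the vanishing of the single real-analytic function $g(s)=\det\big((RU^sT)^*(RU^sT)\big)$ and invoking the identity theorem. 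As you note, this already gives \cref{lem:small-power} directly at $k=1$, so the filtration $V_0\ge V_1\ge\cdots$ and the integer $K$ are never used.

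What each approach buys: yours is markedly shorter and conceptually cleaner, and it makes transparent why the claim should hold (the obstruction set is the zero locus of a nontrivial analytic function, hence discrete). The paper's Taylor/compactness argument is more hands-on and in principle closer to quantitative: the $\delta_k$ it produces is an explicit expression in $\|H\|_{S_\infty}$ and the compactness constants $c_k$, whereas the identity theorem gives no information about $\delta_k$ whatsoever. That said, since the paper's constants $c_k$ themselves come from a nonconstructive compactness step, neither proof as written yields an effective bound, so in the end the difference is mainly aesthetic. Your closing paragraph accurately diagnoses the difficulty the paper's approach has to overcome (the discontinuity of the leading-order index $j(v)$ along the $V_k$) and correctly anticipates the case split the paper performs.
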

    Note that the $k=1$ case of this claim is simply the desired lemma statement, since $V_0=V$ and since we lose nothing by restricting to vectors of norm $1$.
    \begin{proof}[Proof of \cref{claim:induction}]
        We prove the claim by (reverse) induction on $k$. For the base case of $k=K$, let $v \in V_{K-1}$ be a vector of norm $1$. Let $P\in\M(n,\bC)$ denote the orthogonal projection onto the orthogonal complement of $W$. Observe that for any $s>0$, we have
        \[
            P U^s v = \sum_{j=0}^\infty \frac{(is)^j}{j!} PH^j v = \sum_{j=K}^\infty \frac{(is)^j}{j!} PH^j v,
        \]
        since $H^jv \in W$ for all $j<K$, and thus $PH^j v =0$ for all $j <K$. For any $s \in (0,1)$, we have that
        \[
            \bnorm{\frac{(is)^{K+1}}{(K+1)!} PH^{K+1}v}_{L_2} = \frac{s^{K+1}}{(K+1)!} \norm{PH^{K+1}v}_{L_2} \leq \frac{1}{(K+1)!} \norm{H^{K+1}v}_{L_2} \leq \frac{\norm H_{S_{\infty}}^{K+1}}{(K+1)!}\eqqcolon C_K,
        \]
        where we use that $s \leq 1$ and that $P$ is a contraction in the first inequality, and the definition of the Schatten-$\infty$ norm and the assumption $\norm{v}_{L_2}=1$ in the second inequality. Therefore, by Taylor's theorem, we find that for all $s \in (0,1)$ and all $v \in V_{K-1}$ with $\norm{v}_{L_2}=1$, we have that
        \[
            \norm{PU^s v}_{L_2} \geq \bnorm{\frac{(is)^K}{K!} PH^K v}_{L_2} - C_K s^{K+1} = \frac{s^K}{K!} \norm{PH^K v}_{L_2} - C_K s^{K+1}.
        \]
        Note that the function $v \mapsto \norm{PH^K v}_{L_2}$ is a continuous real-valued function on the unit sphere in $V_{K-1}$, which is compact. Moreover, since $V_K=\{0\}$, we know that $H^K v \notin W$ for all non-zero $v \in V_{K-1}$, and thus $\norm{PH^K v}_{L_2}$ is strictly positive for all $v \in V_{K-1}$ with $\norm v_{L_2}=1$. Therefore, there exists some $c_K>0$ so that $\norm{PH^K v}_{L_2} \geq K! c_K$ for all $v \in V_{K-1}$ with $\norm v_{L_2}=1$. Continuing our computation above, we conclude that

         \[
            \norm{PU^s v}_{L_2} \geq c_K s^K - C_K s^{K+1}
        \]
        for all $s \in (0,1)$ and all $v \in V_{K-1}$ with $\norm v_{L_2}=1$. If we let $\delta_K=\min\{c_K/C_K,1\}$, then this implies that $\norm{PU^sv}_{L_2}>0$ for all $s \in (0,\delta_K)$. This is equivalent to saying that $U^s v \notin W$ for all $s \in (0,\delta_K)$, which proves the claim for $k=K$.

        We now move to the inductive step. There is nothing to prove if $V_{k-1}=V_k$, so we may assume that $V_k$ is a proper subspace of $V_{k-1}$. Inductively, suppose we know the claim holds for $k+1$, namely that there exists some $\delta_{k+1}>0$ so that $U^s v \notin W$ for all $v \in V_k$ with $\norm{v}_{L_2}=1$ and all $s \in (0,\delta_{k+1})$. As $v \mapsto U^s v$ is a continuous map, and as $W$ is closed, we conclude that $U^s v \notin W$ for all $s \in (0,\delta_{k+1})$ and all $v$ which is in a sufficiently small open neighborhood of the unit sphere in $V_k$. More precisely, there exists some $\varepsilon_k>0$ so that the following holds for all $v \in V_{k-1}$ with $\norm v_{L_2}=1$: Suppose we write $v=u+w$ where $u \in V_{k}^\perp$ and $w \in V_{k}$, and suppose that $\norm{u}_{L_2} <\varepsilon_k$. Then $U^s v \notin W$ for all $s \in (0,\delta_{k+1})$.

        So it suffices to now only consider such $v$ with $\norm u_{L_2} \geq \varepsilon_k$. Note that for any such $v$, we have that 
        \[
            \norm{PH^k v}_{L_2} = \norm{PH^k (u+w)}_{L_2} = \norm{PH^k u}_{L_2},
        \]
        since $H^k w \in W$, as $w \in V_k$. Now, the set of unit vectors $v \in V_{k-1}$ for which $\norm u_{L_2} \geq \varepsilon_k$ is compact, and the function $v \mapsto \norm{PH^k v}_{L_2}$ is continuous and strictly positive on it. So there exists some $c_k>0$ so that $\norm{PH^k v}_{L_2} \geq k! c_k$ for all such $v$.
        The rest of the proof is very similar to the base case. For any $s>0$, we have that
        \[
            PU^sv = \sum_{j=0}^\infty \frac{(is)^j}{j!} PH^j v = \sum_{j=k}^\infty \frac{(is)^j}{j!} PH^j v.
        \]
        For any $s \in (0,1)$, we have that 
        \[
            \bnorm{\frac{(is)^{k+1}}{(k+1)!} PH^{k+1}v}_{L_2} = \frac{s^{k+1}}{(k+1)!} \norm{PH^{k+1}v}_{L_2} \leq \frac{1}{(k+1)!} \norm{H^{k+1}v}_{L_2} \leq \frac{\norm H_{S_\infty}^{k+1}}{(k+1)!}\eqqcolon C_k.
        \]
        By Taylor's theorem, we conclude that if $v=u+w$ is such that $\norm u_{L_2} \geq \varepsilon_k$, then for any $s \in (0,1)$,
        \[
            \norm{PU^s v}_{L_2} \geq \bnorm{\frac{(is)^k}{k!} PH^k v}_{L_2} - C_k s^{k+1} = \frac{s^k}{k!} \norm{PH^k v}_{L_2} - C_k s^{k+1} \geq c_k s^k - C_k s^{k+1}.
        \]
        Thus, for such $v$, we see that $U^s v \notin W$ for all $s \in (0, c_k/C_k)$. On the other hand, for those $v$ with $\norm u<\varepsilon_k$, we know that $U^s v \notin W$ for all $s \in (0,\delta_{k+1})$. Thus, we get the desired result by setting $\delta_k = \min\{c_k/C_k, \delta_{k+1},1\}$.
    \end{proof}
    As discussed above, the $k=1$ case of the claim is equivalent to the lemma statement, so this concludes the proof. 
\end{proof}

For $1 \leq r \leq n$, let $\Gr(n,r)$ denote the Grassmannian of $r$-dimensional subspaces of $\bC^n$. We make the following definition, which will be useful in the proof of \cref{thm:exp-keeps-dim-exp}.
\begin{definition}
    Let $\vU=(U_1,\dots,U_d)\in\mathrm{U}(n)^d$ be a tuple of $n \times n$ unitary matrices. 
    Given a real number $\mu>0$ and a subspace $W \in \Gr(n,r)$, let us say that a tuple $(V,i,\delta)$ is $\mu$-\emph{expansive for $W$} if it satisfies the following conditions.
    \begin{enumerate}[label=(\roman*)]
        \item $V$ is a subspace of $W$, $i \in [d]$ is an integer, and $\delta>0$ is a strictly positive real number.
        \item We have $\dim V \geq \mu r/d$.
        \item For every $s \in (0,\delta)$, we have that $U_i^s V \cap W=\{0\}$. 
    \end{enumerate}
\end{definition}
Our next simple lemma shows that if $\vU$ is a $(n,d,\mu)$-dimension expander, then every subspace has an expansive tuple. The implication is a straightforward consequence of \cref{lem:small-power}, but the language of expansive tuples will be more convenient for the compactness argument we use in the proof of \cref{thm:exp-keeps-dim-exp}.
\begin{lemma}\label{lem:expansive-exists}
    Let $\vU=(U_1,\dots,U_d)\in\mathrm{U}(n)^d$ be a unitary matrix tuple, and let $\mu>0$ be a real number. If $\vU$ is a $(n,d,\mu)$-dimension expander, then for all $1 \leq r \leq n/2$ and all $W \in \Gr(n,r)$, there is a $\mu$-expansive tuple for $W$.
\end{lemma}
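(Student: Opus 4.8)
The plan is to deduce this directly from the definition of dimension expansion together with \cref{lem:small-power}. Fix $1 \le r \le n/2$ and $W \in \Gr(n,r)$. Since $\vU$ is an $(n,d,\mu)$-dimension expander and $1 \le \dim W \le n/2$, the definition of $\mu(\vU)$ gives $\dim(W + \vU(W)) - \dim(W) \ge \mu r$, where $\vU(W) = U_1(W) + \dots + U_d(W)$. The first step is a pigeonhole/telescoping argument: setting $W_0 = W$ and $W_j = W + U_1(W) + \dots + U_j(W)$, we have $\sum_{j=1}^d (\dim W_j - \dim W_{j-1}) = \dim W_d - \dim W_0 \ge \mu r$, so some index $i \in [d]$ satisfies $\dim W_i - \dim W_{i-1} \ge \mu r/d$. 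Since $W \le W_{i-1}$, one checks that $\dim W_i - \dim W_{i-1} \le \dim(W + U_i(W)) - \dim W = r - \dim(U_i(W) \cap W)$; hence, writing $k \coloneqq \dim(U_i(W) \cap W)$, we get $r - k \ge \mu r/d$.

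The second step constructs the subspace $V$. Let $W' \coloneqq U_i^{-1}(U_i(W) \cap W)$, a $k$-dimensional subspace of $W$ (using that $U_i$ is invertible), and let $V$ be any complement of $W'$ inside $W$, so that $W = W' \oplus V$ and $\dim V = r - k \ge \mu r/d$. I claim $U_i v \notin W$ for every non-zero $v \in V$: if $U_i v \in W$ then, since also $U_i v \in U_i(W)$, we get $v \in U_i^{-1}(U_i(W) \cap W) = W'$, forcing $v \in V \cap W' = \{0\}$, a contradiction.

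The final step invokes \cref{lem:small-power} with the matrix $U_i$ and the pair $V \le W$: since $U_i v \notin W$ for all non-zero $v \in V$, there is some $\delta > 0$ with $U_i^s v \notin W$ for all $s \in (0,\delta)$ and all non-zero $v \in V$, i.e.\ $U_i^s V \cap W = \{0\}$ for all $s \in (0,\delta)$. Then $(V, i, \delta)$ satisfies all three conditions in the definition of a $\mu$-expansive tuple for $W$, which completes the proof. There is no substantial obstacle here; the only point requiring a moment's care is the inequality $\dim W_i - \dim W_{i-1} \le \dim(W + U_i(W)) - \dim W$ used in the telescoping step, which follows from $U_i(W) \cap W_{i-1} \supseteq U_i(W) \cap W$ together with the identity $\dim(X+Y) = \dim X + \dim Y - \dim(X \cap Y)$.
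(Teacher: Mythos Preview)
Your proof is correct and follows essentially the same route as the paper's: pigeonhole to find an index $i$ with $\dim(W+U_iW)-\dim W\ge \mu r/d$, construct a subspace $V\le W$ of dimension at least $\mu r/d$ with $U_iV\cap W=\{0\}$, and then invoke \cref{lem:small-power}. The only cosmetic differences are that the paper obtains $i$ by the one-line inequality $\sum_j(\dim(W+U_jW)-\dim W)\ge \dim(W+\vU(W))-\dim W$ rather than telescoping, and that it takes $V$ to be a maximum-dimensional subspace of $W$ with $U_iV\cap W=\{0\}$ rather than explicitly building it as a complement of $U_i^{-1}(U_i(W)\cap W)$; your explicit construction is exactly what justifies the paper's claim that such a $V$ has dimension at least $\mu r/d$.
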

\begin{proof}
    By the definition of dimension expansion, we know that $\dim(W+U_1 W + \dotsb + U_d W) - \dim(W) \geq \mu r$. Therefore, there exists some $i$ so that $\dim(W+U_i W)-\dim(W) \geq \mu r/d$. Let $V$ be a maximum-dimensional subspace of $W$ with the property that $U_i V \cap W = \{0\}$; then the above implies that $\dim(V) \geq \mu r/d$. Finally, by \cref{lem:small-power}, we see that there exists some $\delta>0$ so that $U^s V \cap W = \{0\}$ for all $0<s<\delta$, implying that $(V,i,\delta)$ is $\mu$-expansive for $W$.
\end{proof}

Now suppose we are given a unitary matrix $U\in \U(n)$ and a subspace $V \leq W$ with $U^s V \cap W = \{0\}$ for all $0<s<\delta$. Intuitively, the continuity of the map $V \mapsto U^s V$ implies that if we perturb $W$ to a ``nearby'' subspace $W'$, we can similarly perturb $V$ to $V' \leq W'$ with the property that $U^s V' \cap W' =\{0\}$ for all $0<s<\delta$. The following lemma makes this precise, for which it is best to use the language of fiber bundles.
Let $\Gr(n,{\leq r})$ denote the disjoint union of $\Gr(n,\ell)$ over $0 \leq \ell \leq r$. There is a fiber bundle $\pi$ over $\Gr(n,r)$ whose fibers are $\Gr(r,{\leq r})$, namely above $W \in \Gr(n,r)$ we simply put all possible subspaces of $W$. More precisely, the total space of the bundle is
\[
    E = \left\{(W,V) \in \Gr(n,r) \times \Gr(n,{\leq r})  : V \text{ is a subspace of } W\right\},
\]
and the bundle map $\pi:E \to \Gr(n,r)$ is given by $\pi(W,V)=W$.

\begin{lemma}\label{lem:section-exists}
    Let $W \in \Gr(n,r)$, and let $U$ be an $n\times n$ unitary matrix. Suppose that there exist $\delta>0$ and a subspace $V \leq W$ so that $U^s V \cap W = \{0\}$ for all $0<s<\delta$. Then there exists an open set $O \subseteq \Gr(n,r)$ with $W \in O$ and a continuous section $\sigma:O \to E$ of the fiber bundle $\pi$ so that $\sigma(W)=V$ and for all $W' \in O$, we have that $U^s \sigma(W') \cap W' = \{0\}$. 
\end{lemma}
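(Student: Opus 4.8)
The plan is to first produce the section $\sigma$ using the local triviality of $\pi$ (the statement is phrased exactly so that this is available), and then to shrink its domain so that the non‑intersection property propagates from $W$ to all nearby $W'$. For the construction: on the component $E_\ell=\{(W',V')\in E:\dim V'=\ell\}$ the map $\pi$ is the projection of the homogeneous space $\U(n)/(\U(\ell)\times\U(r-\ell)\times\U(n-r))$ onto $\Gr(n,r)=\U(n)/(\U(r)\times\U(n-r))$, hence a locally trivial bundle; so there is an open $O_0\ni W$ and a trivialization of $\pi^{-1}(O_0)$ under which the constant map $W'\mapsto(W',V)$ becomes a continuous, dimension‑preserving section $\sigma:O_0\to E$ with $\sigma(W)=V$. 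Concretely one may take $\sigma(W'):=P_{W'}(V)$, since for $W'$ near $W$ the orthogonal projection $P_{W'}$ restricts to a linear isomorphism $W\to W'$; this explicit form will be convenient below. It then remains to find an open $O\subseteq O_0$ with $W\in O$ such that for all $W'\in O$ and all $s\in(0,\delta)$ the map $v'\mapsto(I_n-P_{W'})U^s v'$ is injective on $\sigma(W')$, which is precisely the statement $U^s\sigma(W')\cap W'=\{0\}$.

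For $s$ bounded away from both endpoints this is immediate. Let $g(W',s)$ denote the smallest singular value of the map $\sigma(W')\to(W')^\perp$, $v'\mapsto(I_n-P_{W'})U^sv'$; this is a continuous function of $(W',s)$, and on the slice $\{W\}\times[a,b]$ (for any $[a,b]\subseteq(0,\delta)$) it is strictly positive, hence bounded below by compactness, so a tube‑lemma argument yields a neighborhood $O_{[a,b]}$ of $W$ on which $g$ stays bounded below on $[a,b]$. The difficulty is uniformity as $s\to0^+$: since $V\le W$ we have $g(W,0)=0$, so these neighborhoods may collapse to $\{W\}$ as $a\to 0$, and a crude lower bound on the first few Taylor coefficients of $s\mapsto(I_n-P_{W'})U^sv'$ does \emph{not} bound its first positive zero away from $0$. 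This small‑$s$ regime is where the real work lies, and I expect it to be the main obstacle.

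To handle it I would re‑run the proof of \cref{lem:small-power}, tracking every constant uniformly in $W'$. Recall the filtration $V=V_0\supseteq V_1\supseteq\dots\supseteq V_K=\{0\}$ there, with $V_k=\{v\in V:H^jv\in W\text{ for }0\le j\le k\}$ and $V_K=\{0\}$ because the hypothesis forces $\bigcap_k V_k=\{0\}$. I would transport this entire flag by the projections $P_{W'}$, setting $V'_k:=P_{W'}(V_k)$; for $W'$ near $W$ this is a continuous, dimension‑preserving flag inside $\sigma(W')=V'_0$ with $V'_K=\{0\}$. The transported flag satisfies the relations ``$H^jv'\in W'$ for $v'\in V'_k$, $j\le k$'' only up to an error that tends to $0$ as $W'\to W$ (because $P_{W'}\to P_W$ in operator norm), and that is enough: in the backwards induction of \cref{lem:small-power} the upper bounds $C_k=\norm{H}_{S_\infty}^{k+1}/(k+1)!$ are already independent of $W'$, while the lower bounds $c_k$ and the thresholds $\varepsilon_k$ reduce, after absorbing these uniformly‑small errors, to the strict positivity of continuous functions on the \emph{fixed} compact unit spheres of the fixed subspaces $V_{k-1}\cap V_k^\perp$. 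Shrinking $O$ so that all error terms are dominated, the induction produces a single $\delta'>0$ with $U^s\sigma(W')\cap W'=\{0\}$ for all $W'\in O$ and all $s\in(0,\delta')$.

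This already suffices for the use of the lemma in \cref{thm:exp-keeps-dim-exp}: one only needs \emph{some} positive interval of $s$ that works uniformly over a neighborhood, after which compactness of $\Gr(n,r)$ yields a uniform $s_0$. To recover the literal interval $(0,\delta)$ of the statement one glues $O$ to the neighborhoods $O_{[a,b]}$ of the second paragraph to cover $(0,\delta)$ up to any sub‑interval $[\,\cdot\,,\delta-\eta]$; the remaining sliver near $s=\delta$ is handled by continuity when the smallest singular value of $(I_n-P_W)U^\delta|_V$ is positive, and otherwise by the same Taylor‑expansion technique applied around $s=\delta$ (writing $U^s=U^\delta e^{i(s-\delta)H}$ reduces this to \cref{lem:small-power} with $H$ replaced by $-H$ and $W$ by $U^{-\delta}W$). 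Everything outside the third paragraph is routine continuity and compactness; the uniform small‑$s$ estimate is the crux.
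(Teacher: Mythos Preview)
Your proposal is considerably more careful than the paper's own argument. The paper's proof is a three-sentence continuity sketch: it defines, for each $X\in\Gr(n,r)$, the set $\mathcal V(X)$ of $\ell$-dimensional $Y\le X$ with $U^sY\cap X=\{0\}$ for all $s\in(0,\delta)$, asserts that $\mathcal V(X)$ is open ``as $(s,Y)\mapsto U^sY$ is continuous in both variables'', that $\mathcal V(X)$ varies continuously with $X$, and concludes. It does not isolate the endpoints of $(0,\delta)$ at all.

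You have correctly put your finger on why that sketch is incomplete. The defining condition for $\mathcal V(X)$ is an intersection over the non-compact interval $(0,\delta)$ of open conditions, and continuity alone does not make such an intersection open; the degeneracy $U^0Y=Y\le X$ at $s=0$ is exactly the obstruction your second paragraph describes. Your remedy---transporting the flag $V_k$ from the proof of \cref{lem:small-power} via $P_{W'}$ and observing that the upper bounds $C_k$ are already uniform while the lower bounds $c_k$ and thresholds $\varepsilon_k$ reduce to strict positivity of continuous functions on \emph{fixed} compact spheres---is a correct way to close the gap, and is genuinely different from (and more substantive than) what the paper writes.

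One small correction to your last paragraph: the reduction ``$H\mapsto -H$, $W\mapsto U^{-\delta}W$'' for the $s\to\delta^-$ endpoint presupposes the analogue of the containment $V\le W$, which would read $V\le U^{-\delta}W$, i.e.\ $U^\delta V\le W$; this is not part of the hypothesis. The fix is easy (split off the vectors $v$ with $U^\delta v\notin W$, for which continuity already suffices, and apply the lemma to the rest), but it deserves a line. As you observe, for the use in \cref{thm:exp-keeps-dim-exp} this endpoint is in any case irrelevant: any uniform interval $(0,\delta')$ is enough.
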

\begin{proof}
    Let $\ell = \dim V$. For $X\in \Gr(n,r)$, let $\V(X)$ be the collection of $\ell$-dimensional subspaces $Y$ of $X$ with the property that $U^s Y \cap X = \{0\}$ for all $0<s<\delta$. As $(s,Y) \mapsto U^s Y$ is continuous in both variables, we see that $\V(X)$ is open for all $X$. Again by continuity, $\V(X)$ also varies continuously as we vary $X \in \Gr(n,r)$. Since $V \in \V(W)$, these properties imply that we can find an open neighborhood $O$ of $W$ and a section $\sigma$ as claimed. 
\end{proof}


We are now ready to prove \cref{thm:exp-keeps-dim-exp}.
\begin{proof}[Proof of \cref{thm:exp-keeps-dim-exp}]
    Let $\vU=(U_1,\dots,U_d)\in\mathrm{U}(n)^d$ be a $(n,d,\mu)$-dimension expander. By \cref{lem:expansive-exists}, for every $1 \leq r \leq n/2$ and every $W \in \Gr(n,r)$, we may find an expansive tuple $(V,i,\delta)$ for $W$. By \cref{lem:section-exists}, there exists an open neighborhood $O_W$ of $W$ as well as a section $\sigma:O_W \to E$ of the bundle $\pi$ so that $U_i^s \sigma_i(W') \cap W'=\{0\}$ for all $W' \in O_W$ and all $0<s<\delta$.

    Now, the collection $\{O_W\}_{W \in \Gr(n,r)}$ forms an open cover of $\Gr(n,r)$, so by compactness, we can find a finite subcover, say $O_1,\dots,O_T$. By the way we constructed these $O_W$, we see that there are $\delta_1,\dots,\delta_T>0$ so that for each $W \in N_j$, there is an expansive tuple for $W$ with $\delta=\delta_j$. By letting $s_0 = \min_{j} \delta_j$ we conclude that for every $W \in \Gr(n,r)$, there is an expansive tuple for $W$ with $\delta \geq s_0$. In other words, for every $W$, there exist $i \in [d]$ and $V\leq W$ with $\dim V \geq \mu r/d$ so that $U_i^s V \cap W=\{0\}$ for all $0 <s <s_0$. This implies that $\dim(W+\vU^s(W))-\dim(W)\geq \mu r/d$ for all $W$. In other words, we see that $\mu(\vU^s)\geq \mu/d$ for all $0<s<s_0$, as claimed.
\end{proof}


To summarize, we have proven that given \emph{any} dimension expander $\vU=(U_1,\dots,U_d) \in \U(n)^d$, two things are simultaneously true. On the one hand, all sufficiently small powers $\vU^s$ remain dimension expanders. On the other hand, as $s \to 0$, the tuple $\vU^s$ converges to the identity tuple, and thus a sufficiently small power is an arbitrarily bad quantum expander. It is natural to hope that one can ``reverse'' this process; namely, that by taking a \emph{large} power $s$, we can convert any dimension expander into one that is also a quantum expander. Sadly, this is also not true, as shown by the following simple counterexample. 
\begin{proposition}
	There exists some dimension expander $\vU \in \U(n)^d$ such that for any $s>0$, ${\vU^s}$ is not a quantum expander. 

    More precisely, there exists an absolute constant $\mu >0$ such that the following holds for all $\varepsilon>0$ and all sufficiently large $n$. There exists a $(n,100,\mu)$-dimension expander $\vU = (U_1,\dots,U_{100}) \in \U(n)^{100}$ such that for all $s>0$, we have $\lambda({\vU^s})< \varepsilon$.
\end{proposition}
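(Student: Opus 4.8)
The plan is to build $\vU$ out of conjugates of a single fixed diagonal unitary, twisted by unitary matrices taken extremely close to the identity. Fix a diagonal matrix $U_0 \in \U(n)$ whose $n$ diagonal entries are distinct and none equal to $-1$, and let $H_0$ be the real diagonal matrix with entries in $(-\pi,\pi)$ for which $U_0 = e^{iH_0}$. Given $\varepsilon>0$, put $\eta := \varepsilon/4$. We will choose unitaries $V_1,\dots,V_{100}$ with $\norm{V_i - I_n}_{S_\infty} < \eta$, set $U_i := V_i U_0 V_i^*$ and $\vU := (U_1,\dots,U_{100})$, and fix the Hermitian logarithm of each $U_i$ to be $H_i := V_i H_0 V_i^*$ (a legitimate choice, since $H_i$ is conjugate to $H_0$ and so still has spectrum in $(-\pi,\pi)$). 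Then $U_i^s = e^{isH_i} = V_i U_0^s V_i^*$ for every $s>0$; the point of the conjugate form is exactly that this stays close to the common matrix $U_0^s$ for \emph{all} $s$ — something a general small $V_i$-independent perturbation would not achieve once $U_0^s$ itself drifts far from $U_0$.

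The easy half is to verify that $\lambda(\vU^s) < \varepsilon$ for every $s>0$, using only $\norm{V_i - I_n}_{S_\infty}<\eta$. Since $U_0^s$ is diagonal, the nonzero traceless matrix $X_0 := \E_{1,1} - \E_{2,2}$ commutes with $U_0^s$, so $U_0^s X_0 (U_0^s)^* = X_0$. Writing $\tilde A_i := V_i U_0^s V_i^*$ and $A := U_0^s$, a double telescoping estimate using unitarity and submultiplicativity of the Schatten norms gives $\norm{\tilde A_i X_0 \tilde A_i^* - A X_0 A^*}_{S_2} \le 2\norm{\tilde A_i - A}_{S_\infty}\norm{X_0}_{S_2} \le 4\norm{V_i - I_n}_{S_\infty}\norm{X_0}_{S_2} < 4\eta\, \norm{X_0}_{S_2}$, and averaging over $i$ yields $\norm{\Phi_{\vU^s}(X_0) - X_0}_{S_2} < 4\eta\norm{X_0}_{S_2}$. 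Because $X_0$ is traceless and $\Lambda_{\vU^s} = \mathcal I - \Phi_{\vU^s}$ maps the space of traceless matrices to itself (with smallest singular value on that subspace exactly $\lambda(\vU^s)$), we conclude $\lambda(\vU^s) \le \norm{\Lambda_{\vU^s}(X_0)}_{S_2}/\norm{X_0}_{S_2} < 4\eta = \varepsilon$.

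It remains to choose the $V_i$, still with $\norm{V_i - I_n}_{S_\infty}<\eta$, so that $\mu(\vU) \ge \mu_0$ for some universal constant $\mu_0>0$. The plan is to show that $\mathcal G := \{(V_1,\dots,V_{100}) \in \U(n)^{100} : \mu((V_i U_0 V_i^*)_i) \ge \mu_0\}$ is a nonempty Zariski-open subset of $\U(n)^{100}$. It is Zariski-open because the condition $\mu(\vU) < \mu_0$ asserts the existence of a subspace $V$ with $1\le\dim V \le n/2$ and $\dim(V + \vU(V)) < (1+\mu_0)\dim V$, which for fixed $V$ is a vanishing-of-minors condition; eliminating $V$ (the Grassmannian is projective, hence the image is closed) shows the set of bad tuples is closed, and it stays closed after pulling back along the algebraic map $(V_i)_i \mapsto (V_i U_0 V_i^*)_i$. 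It is nonempty precisely when some tuple of unitary conjugates of $U_0$ is a $\mu_0$-dimension expander; this is the crux, and one proves it by showing that $(V_i U_0 V_i^*)_i$ is such an expander with high probability when $V_1,\dots,V_{100}$ are independent Haar-random unitaries — an adaptation of the standard concentration-of-measure proof that independent Haar-random unitary tuples are dimension expanders, the adaptation being exactly what forces the eigenvalues of $U_0$ to be distinct and spread out on the circle. Since $\U(n)^{100}$ is an irreducible real-algebraic variety, a nonempty Zariski-open subset is Euclidean-dense, so $\mathcal G$ contains tuples arbitrarily close to $(I_n,\dots,I_n)$; pick one with $\norm{V_i - I_n}_{S_\infty}<\eta$. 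With this choice $\vU$ is an $(n,100,\mu_0)$-dimension expander and $\lambda(\vU^s)<\varepsilon$ for every $s>0$, so taking $\mu := \mu_0$ proves the proposition (and letting $\eta \to 0$ as $n\to\infty$ produces a single family with the stated properties).

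The main obstacle is the non-emptiness step in the last paragraph — that conjugates of a single fixed unitary can already form a dimension expander with an absolute expansion constant, despite all of them sharing a spectrum. The remaining ingredients (the Schatten-norm estimate, the Zariski-openness, and the density argument) are routine.
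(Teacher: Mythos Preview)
Your construction is essentially the paper's: set $U_i = V_i D V_i^*$ with $D$ diagonal unitary and the conjugators $V_i$ close to the identity, so that $U_i^s = V_i D^s V_i^*$ stays uniformly close to the diagonal $D^s$ for every $s>0$, and a diagonal traceless test matrix then witnesses $\lambda(\vU^s)<\varepsilon$. The paper takes independent random diagonals $D_i$ and samples the conjugators from an explicit open neighborhood $E_\varepsilon$ of the identity; you take a single fixed $U_0$ and invoke Zariski-density to locate good $V_i$ near the identity. Your Schatten-norm telescoping for the quantum bound and the Zariski-openness argument for $\mathcal G$ are both correct and a bit cleaner than the paper's corresponding steps.

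The one place your variant is genuinely harder is exactly the step you flag. In the paper's setup, the parametrization $(M_i,D_i)\mapsto (M_iD_iM_i^*)$ surjects onto $\U(n)^{100}$, so nonemptiness of the good set follows at once from the cited fact that Haar-random unitary tuples are $(n,100,\mu)$-dimension expanders; the bad set is then a proper Zariski-closed subset of the parameter space, hence of measure zero, and the sample from $E_\varepsilon^{100}\times T^{100}$ avoids it almost surely. With a single fixed $U_0$, the image of $(V_i)\mapsto(V_iU_0V_i^*)$ is only the product of the conjugacy class of $U_0$, so you must show that \emph{some} tuple of unitaries all sharing the spectrum of $U_0$ is already a $\mu_0$-dimension expander with $\mu_0$ independent of $n$. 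This is the adaptation you allude to, and it is real additional work beyond what the paper needs. If you wish to sidestep it, you can let the diagonal vary (i.e.\ run your Zariski argument on $\U(n)^{100}\times T^{100}$ instead), which recovers the paper's easier nonemptiness step at no cost to the rest of your proof.
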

\begin{proof}[Proof sketch]
Fix some $\varepsilon>0$. Let $e_1,\dots,e_n$ be the standard basis of $\bC^n$. 
Let $E_\varepsilon \subseteq \mathrm U(n)$ denote the set of $n\times n$ unitary matrices $M$ with the property that for all $j\in[n]$, we have $\ab{\langle m_j,e_j\rangle}> 1- \varepsilon$, where $m_j$ is the $j$th column of $M$. Then $E_\varepsilon$ is a non-empty open subset of $\mathrm U(n)$, which means that we can sample according to the induced Haar measure on $E_\varepsilon$. Let $M_1,\dots,M_{100}$ be $100$ independently random samples from this measure. Additionally, let $D_1,\dots,D_{100}$ be independent random diagonal matrices whose diagonal entries are drawn uniformly at random from the unit circle. Finally, let $U_i = M_i D_i M_i^*$, so that each $U_i$ is a random unitary matrix whose eigenvectors are the columns of $M_i$ and whose eigenvalues are the diagonal entries of $D_i$. Note that for any $s>0$, we have $U_i^s = M_i D_i^s M_i^*$, and $D_i^s$ is a diagonal matrix whose diagonal entries are the $s$th powers of the diagonal entries of $D_i$.

It is well-known that $100$ random unitary matrices form a $(n,100,\mu)$-dimension expander for some fixed $\mu>0$ when $n$ is large~\cite{DS09}. For the same reason, it is straightforward to check that $\vU=(U_1,\dots,U_{100})$ forms an $(n,100,\mu)$-dimension expander for some fixed $\mu>0$. The point is that while $U_1,\dots,U_d$ are not uniformly random unitary matrices, they are ``generic'' in an appropriate sense, which suffices for them to form a dimension expander. However, we claim that for any $s>0$, we have $\lambda(\vU^s)\leq 10 \varepsilon$. Since $\varepsilon$ was arbitrary, this yields an example of a dimension expander none of whose powers is a quantum expander. 

To see this, we first observe that by the definition of $E_\varepsilon$, the $(1,1)$ entry of $M_i$ has absolute value at least $1- \varepsilon$, and the first row of $M_i$ is a unit vector. Since $D_i^s$ is a diagonal matrix whose diagonal entries have absolute value $1$, this implies that both $M_i^* e_1$ and $D_i^s M_i^* e_1$ are unit vectors whose first coordinate has norm at least $1- \varepsilon$. Let $v,w \in \bC^{n-1}$ be the last $n-1$ coordinates of $M_i^* e_1$ and $D_i^s M_i^* e_1$, respectively, so that $\norm v_{L_2},\norm w_{L_2} \leq \sqrt{1-(1- \varepsilon)^2} \leq \sqrt{2 \varepsilon}$.
The Cauchy--Schwarz inequality then gives $\ab{\langle v,w\rangle} \leq 2 \varepsilon$, which implies implies
\[
    \ab{\langle e_1,U_i^s e_1\rangle} = \ab{\langle e_1, M_i D_i^s M_i^* e_1\rangle} = \ab{\langle M_i^* e_1, D_i^s M_i^* e_1\rangle} \geq (1- \varepsilon)^2 - \ab{\langle v,w\rangle}  \geq 1- 4 \varepsilon.
\]
Let $P = e_1e_1^*$ be the projection on to the subspace spanned by $e_1$. Then the entry of $U_i^s P (U_i^s)^*$ in the $(1,1)$ position is
\[
    e_1^* \left(U_i^s P (U_i^s)^*\right) e_1 = (e_1^* U_i^s e_1) (e_1^* (U_i^s)^* e_1) = \ab{\langle e_1,U_i^s e_1\rangle}^2 \geq 1- 8 \varepsilon.
\]
Let $X = P - \frac1 n I_n$, so that $X$ is a traceless matrix with $S_2$-norm $1-O(\frac 1n)$. The computation above implies that the $(1,1)$ entry of $U_i^s X (U_i^s)^*$ is at least $1-8 \varepsilon- \frac 1n$. As this holds for all $i$, we conclude that it also holds for $\Phi_{\vU^s}(X)$, which in turn implies that $\norm{\Phi_{\vU^s}(X)}_{S_2} \geq 1- 8 \varepsilon - \frac 1n$. As $\norm X_{S_2} \geq 1- O(\frac 1n)$, we conclude that $\lambda(\vU^s) \leq 10 \varepsilon$ for all sufficiently large $n$.
\end{proof}


\section{Relations between linear-algebraic notions of expansion}\label{sec:quantum-to-dim}
In this section, we prove \cref{thm:main-relationships}. \cref{thmit:quant-cheeger} was proved by Hastings \cite[Appendix A]{PhysRevA.76.032315}, so it remains to prove \cref{thmit:dim,thmit:edge-comparison}.

We begin with \cref{thmit:dim}. 
As remarked in the introduction, the result actually holds for arbitrary matrix tuples over arbitrary fields, as stated in the following result.


\begin{proposition}\label{prop:dim_to_rank2}
	For $\vB\coloneqq (B_1, \dots, B_d)\in \M(n, \F)^d$, it holds that $\frac {\mu(\vB)}d\leq h_D(\vB)\leq \mu(\vB)$.
\end{proposition}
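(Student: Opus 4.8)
The plan is to reduce both inequalities to a single dimension-counting identity for the rank of a restriction, after which everything follows from elementary facts about dimensions of sums of subspaces. First I would establish the identity $\rk(B|_{V^\perp,V}) = \dim(V + B(V)) - \dim V$ for every $B \in \M(n,\F)$ and every subspace $V \le \F^n$. With $T,R$ as in \cref{rmk:def-annihilator} we have $B|_{V^\perp,V} = RBT$; since the columns of $T$ span $V$, the column span of $BT$ is $B(V)$, so $\rk(RBT) = \dim(R(B(V)))$, where $R$ is regarded as the map $x \mapsto Rx$ from $\F^n$ to $\F^{n-r}$. Its kernel is $\{x : u^t x = 0 \text{ for all } u \in V^\perp\} = (V^\perp)^\perp = V$, using that the standard dot product on $\F^n$ is nondegenerate and $V$ is finite-dimensional. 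Hence $\dim(R(B(V))) = \dim B(V) - \dim(B(V)\cap V) = \dim(V + B(V)) - \dim V$. (This also reconfirms that $\rk(B|_{V^\perp,V})$ does not depend on the choices of $T$ and $R$.)

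Writing $e_i(V) := \dim(V+B_i(V)) - \dim V$ and $g(V) := \dim(V+\vB(V)) - \dim V$, the identity above identifies the quantity being minimized in the definition of $h_D(\vB)$ at $V$ with $\frac{1}{d\dim V}\sum_{i=1}^d e_i(V)$, and the one for $\mu(\vB)$ at $V$ with $g(V)/\dim V$. I would then record two inequalities valid for every subspace $V$: \textbf{(a)} $e_i(V) \le g(V)$ for each $i$, since $B_i(V)\subseteq \vB(V)$ forces $V + B_i(V) \subseteq V + \vB(V)$, and hence $\sum_i e_i(V) \le d\,g(V)$; and \textbf{(b)} $g(V) \le \sum_i e_i(V)$, obtained by telescoping along the chain $W_0 = V \subseteq W_1 \subseteq \dots \subseteq W_d$ with $W_j = V + \sum_{k\le j} B_k(V)$, using at each step that $V \subseteq W_{j-1}$ to get
\[
    \dim W_j - \dim W_{j-1} = \dim B_j(V) - \dim(B_j(V)\cap W_{j-1}) \le \dim B_j(V) - \dim(B_j(V)\cap V) = e_j(V).
\]

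Finally I would combine these, noting that both $\mu(\vB)$ and $h_D(\vB)$ minimize over the same range $1 \le \dim V \le n/2$. Taking $V^*$ to attain $\mu(\vB)$ gives $h_D(\vB) \le \frac{1}{d\dim V^*}\sum_i e_i(V^*) \le \frac{g(V^*)}{\dim V^*} = \mu(\vB)$ by (a); taking $V^\#$ to attain $h_D(\vB)$ gives $h_D(\vB) = \frac{1}{d\dim V^\#}\sum_i e_i(V^\#) \ge \frac{g(V^\#)}{d\dim V^\#} \ge \frac{\mu(\vB)}{d}$ by (b) together with the definition of $\mu(\vB)$ as a minimum. The only delicate point is the first step — specifically the fact $(V^\perp)^\perp = V$ over an arbitrary field and the verification that $\ker(x\mapsto Rx)$ is exactly $V$; the rest is routine dimension arithmetic, so I do not expect a genuine obstacle.
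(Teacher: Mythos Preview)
Your proof is correct. Both your argument and the paper's ultimately rest on the same rank identity $\rk(B|_{V^\perp,V}) = \dim B(V) - \dim(B(V)\cap V) = \dim(V+B(V)) - \dim V$, and both inequalities then follow from the elementary comparison between $\sum_i \dim W_i$ and $\dim\bigl(\sum_i W_i\bigr)$. The one structural difference is that you isolate this identity at the outset and use a clean telescoping chain for $g(V) \le \sum_i e_i(V)$, whereas the paper, for the bound $\mu(\vB)/d \le h_D(\vB)$, first passes to the augmented tuple $\vB' = (\vB, I_n)$ (to force $V \subseteq \vB'(V)$) and then works with column spans directly without naming the identity. Your packaging is more transparent; the detour through $\vB'$ buys nothing your identity does not already deliver.
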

\begin{proof}
	We first show that an $\mu(\vB)\leq d\cdot h_D(\vB)$. For any $\vB\in \M(n,\F)^d$, let $\vB'=(\vB,I_n)\in \M(n,\F)^{d+1}$. From the definitions, it is clear that $\mu(\vB')=\mu(\vB)$ and $h_D(\vB')=\frac{d}{d+1} h_D(\vB)$. We shall prove that $\mu(\vB')\leq (d+1)h_D(\vB')$.
	
    Fix some $V\leq \F^n$ of dimension $1 \leq r\leq n/2$ and let $T\in \M(n\times r,\F)$ be a matrix whose columns form a basis of $V$. Let $R\in \M((n-r)\times n,\F)$ be a matrix whose rows form a basis of $\ann$, as defined in~\cref{rmk:def-annihilator}. Let $B_i|_{\ann,V}=RB_iT\in\M((n-r)\times r,\F)$ for each $i\in[d]$. Note that we have $\rk(T)=r$ and $\rk(R)=n-r$.
	
	Let $t=\dim(V+\vB'(V))$, and notice that $t=\dim(\vB'(V))$ since $B_{d+1}=I_n$. We have that $t\geq (1+\mu(\vB'))\cdot\dim(V)=(1+\mu(\vB'))\cdot r$. Let $W\in \M(n\times t,\F)$ be a matrix whose columns form a basis of $\vB'(V)$. 
    Then we have $\rk(RW)=\dim(\vB'(V))-\dim(\ker(R)\cap \vB'(V))\geq t-r \geq \mu(\vB')\cdot r$ and
	\begin{align}
		\sum_{i=1}^{d+1}\rk(B_i|_{\ann, V}) & =  \sum_{i=1}^{d+1}\dim(\colspan(B_i|_{\ann, V})) \nonumber \\
		& \geq \dim(\langle\cup_{i\in[d+1]} \colspan(RB_i T)\rangle) \nonumber \\ 
		& = \dim(R\langle\cup_{i\in[d+1]} \colspan(B_iT)\rangle)\label{eq:rank-R} \\
		& = \rk(RW) \label{eq:colspan} \\
		& \geq \mu(\vB^{\prime})\cdot r,\nonumber
	\end{align}
    where \eqref{eq:rank-R} holds since $R$ has full row rank and \eqref{eq:colspan} holds  since $\langle \cup_{i\in[d+1]} \colspan(B_iT)\rangle=\vB'(V)$.
	Ranging over all subspaces $V\leq\F^n$ of dimension at most $n/2$, 
	we find that $\mu(\vB')\leq (d+1) h_D(\vB')$, and $\mu(\vB)\leq d\cdot h_D(\vB)$ follows.
	
	Now we show that $ h_D(\vB)\leq \mu(\vB)$. Fix some $V\leq \F^n$ of dimension $1\leq r\leq n/2$, let $T,R$ be as above, and note that $V=\ker(R)$. Let $W$ be a matrix whose columns are a basis of $\vB(V)$. 
	We have that 
	\begin{align*}
		d\cdot h_D(\vB&)\cdot\dim(V)  \leq  \sum_{i=1}^d\rk(B_i|_{\ann, V}) \\
		& =  \sum_{i=1}^d\dim(\colspan(RB_iT)) \\
		& \leq d\cdot\dim(\cup_{i\in[d]}\colspan(RB_iT)) & [\text{since }\textstyle\sum_{i\in[d]}\dim(W_i)\leq d\cdot \dim(\cup_{i\in[d]}W_i) ]\\
		& =  d\cdot\dim(R(\cup_{i\in[d]}\colspan(B_iT)))& [\text{since }R\text{ has full row rank} ]\\
		& =  d\cdot\rank(RW) &[\text{since }\cup_{i\in[d]} \colspan(B_iT)=\vB(V) ]\\
		& =  d\cdot(\dim(\vB(V))-\dim(\vB(V) \cap \ker(R)) & [\text{since }\vB(V)=\colspan(W)]\\
		& =  d\cdot(\dim(\vB(V))-\dim(V\cap\vB(V))) & [\text{since }V=\ker(R)]\\
		& =  d\cdot(\dim(V+\vB(V))-\dim(V)).
	\end{align*}
	This implies that $\dim(V+\vB(V))-\dim (V)\geq h_D(\vB)\cdot \dim(V)$. Ranging over all subspace $V\leq\F^n$ of dimension at most $n/2$,
	we conclude that $ h_D(\vB)\leq \mu(\vB)$.
\end{proof} 

We now turn to \cref{thmit:edge-comparison,thmit:unitary-edge-comparison}, which lower-bound the dimension edge expansion in terms of the quantum edge expansion. As explained in the introduction, it is this inequality which allows us to prove that quantum expanders are dimension expanders.







In order to prove \cref{thmit:edge-comparison,thmit:unitary-edge-comparison}, we will use the following equivalent formulation of quantum edge expansion.
\begin{lemma}
	For any doubly stochastic matrix tuple $\vB=(B_1,\dots,B_d) \in \M(n,\bC)^d$, we have
	\begin{equation}\label{eq: quantum edge expansion norm formulation}
		h_Q(\vB)=\min_{\substack{V \leq \bC^n\\1\leq\dim(V)\leq\frac{n}{2}}}\frac{\sum_{i=1}^d\norm{B_i|_{V^\perp,V}}_{S_2}^2}{d\cdot\dim(V)}.
	\end{equation}
\end{lemma}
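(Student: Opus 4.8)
The plan is to unwind both sides of the claimed identity and show the summands match term by term. Recall the definition
\[
h_Q(\vB)=\min_{\substack{V\le\bC^n\\1\le\dim(V)\le n/2}}\frac{\langle I_n-P_V,\Phi_\vB(P_V)\rangle}{\dim(V)},
\]
so it suffices to prove that for every subspace $V$ of dimension $r$,
\[
\langle I_n-P_V,\Phi_\vB(P_V)\rangle=\frac 1d\sum_{i=1}^d\norm{B_i|_{V^\perp,V}}_{S_2}^2.
\]
First I would expand the left-hand side using the definition of $\Phi_\vB$ and bilinearity of the inner product on $\M(n,\bC)$, writing $\langle I_n-P_V,\Phi_\vB(P_V)\rangle=\frac 1d\sum_i\langle I_n-P_V,B_iP_VB_i^*\rangle=\frac 1d\sum_i\trace\big((I_n-P_V)B_iP_VB_i^*\big)$. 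Since $P_V$ is an orthogonal projection, $P_V=P_V^2=P_V^*$, so $B_iP_VB_i^*=(B_iP_V)(B_iP_V)^*$, and writing $P_V=TT^*$ where $T$ has orthonormal columns spanning $V$, the $i$th trace term becomes $\trace\big((I_n-P_V)B_iTT^*B_i^*\big)=\trace\big(T^*B_i^*(I_n-P_V)B_iT\big)$.

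The next step is to recognize that $I_n-P_V=P_{V^\perp}=R^*R$ where $R$ has orthonormal rows spanning $V^\perp$ (equivalently $R^*$ has orthonormal columns), so the trace term becomes $\trace\big(T^*B_i^*R^*RB_iT\big)=\trace\big((RB_iT)^*(RB_iT)\big)=\norm{RB_iT}_{S_2}^2=\norm{B_i|_{V^\perp,V}}_{S_2}^2$, using the definition $B_i|_{V^\perp,V}=RB_iT$ and the fact that $\norm{X}_{S_2}^2=\trace(X^*X)$. Summing over $i$ and dividing by $d$ gives exactly the claimed per-subspace identity, and taking the minimum over $V$ with $1\le\dim(V)\le n/2$ yields \eqref{eq: quantum edge expansion norm formulation}. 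I would also note explicitly that the $S_2$-norm of $B_i|_{V^\perp,V}$ is independent of the choice of orthonormal bases (hence of $T,R$), as already remarked after the definition of the restriction, so the right-hand side is well-defined.

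I do not anticipate a serious obstacle here; the only things to be careful about are the placement of the adjoint (the inner product on $\M(n,\bC)$ is $\langle X,Y\rangle=\trace(X^*Y)$, and $I_n-P_V$ is self-adjoint so the conjugation is harmless) and the use of cyclicity of the trace, both of which are routine. One minor point worth stating cleanly is that $P_{V^\perp}=R^*R$ precisely because the rows of $R$ form an orthonormal basis of $V^\perp$, so $RR^*=I_{n-r}$ and $R^*R$ is the orthogonal projection onto $\operatorname{rowspan}(R)=V^\perp$; combined with $P_V=TT^*$ this is all the linear algebra needed. The argument works verbatim for doubly stochastic tuples since it never uses anything beyond the definitions of $\Phi_\vB$, the inner product, and the restriction operator.
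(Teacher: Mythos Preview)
Your proposal is correct and follows essentially the same approach as the paper: expand $\langle I_n-P_V,\Phi_\vB(P_V)\rangle$ using $P_V=TT^*$ and $I_n-P_V=R^*R$, apply cyclicity of the trace, and recognize each summand as $\norm{RB_iT}_{S_2}^2=\norm{B_i|_{V^\perp,V}}_{S_2}^2$. The paper's proof is the same computation with the notation $T_{V^\perp}$ in place of your $R^*$.
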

\begin{proof}
    Fix some subspace $V \leq \bC^n$ of dimension $1 \leq r \leq n/2$.
    Let $T_V$ be an $n\times r$ matrix whose columns form an 
	orthonormal basis of $V$. Then $P_V=T_V T^*_V$. 
    Let 
	$V^\perp$ be the orthogonal complement of $V$, and $T_{V^\perp}$ be an 
	$n\times (n-r)$ matrix whose columns form an orthonormal basis of $V^\perp$. Then 
	$P_{V^\perp}=I_n-P_V=T_{V^\perp}T_{V^\perp}^*$.
	
	We then have 
	\[
	\begin{split}
		\langle I_n-P_V,\Phi(P_V)\rangle&=\trace((I_n-P_V)^*\Phi(P_V))\\
		&=\frac 1d\sum_{i=1}^d\trace\left(T_{V^\perp}
                T_{V^\perp}^* B_iT_VT^*_VB_i^*\right)\\
		&=\frac 1d \sum_{i=1}^d\trace\left(T_{V^\perp}^*B_iT_VT^*_VB_i^*T_{V^\perp}\right)\\
		&=\frac 1d\sum_{i=1}^d\norm{T_{V^\perp}^*B_iT_V}_{S_2}^2\\
        &=\frac 1d\sum_{i=1}^d\norm{B_i|_{V^\perp, V}}_{S_2}^2.
	\end{split}
	\]
    This implies that the objective functions in~\eqref{eq: quantum edge expansion norm formulation} and \eqref{eq: quantum edge expansion} are identical. The feasible regions are also the same, which concludes the proof.
\end{proof}

We are now ready to prove \cref{thmit:edge-comparison,thmit:unitary-edge-comparison}.
\begin{proof}[Proof of~\cref{thmit:edge-comparison,thmit:unitary-edge-comparison}]
    Fix a doubly stochastic matrix tuple $\vB = (B_1,\dots,B_d) \in \M(n,\bC)^d$. By \eqref{eq: quantum edge expansion norm formulation} and \eqref{eq: dim edge expansion}, we wish to prove that
    \[
        \min_{\substack{V \leq \bC^n\\1\leq\dim(V)\leq\frac{n}{2}}}\frac{\sum_{i=1}^d\norm{B_i|_{V^\perp,V}}_{S_2}^2}{d\cdot\dim(V)} \leq
        d \cdot 
        \min_{\substack{V \leq \bC^n\\
        1\leq\dim(V)\leq \frac n2}}\frac{\sum_{i=1}^d\rk(B_i|_{V^\perp, V})}{d\cdot\dim(V)}.
    \]
    So it suffices to prove that
    $\norm{B_i|_{V^\perp, V}}_{S_2}^2\leq d\cdot \rk(B_i|_{V^\perp, V})$ for any subspace $V\leq\bC^n$ and all $i\in[d]$. We first claim that the operator norm of $B_i|_{V^\perp, V}=T_{V^\perp}^*B_iT_V$ is at most $\sqrt d$. Indeed, recall that $\sum_{i=1}^d B_i^*B_i=dI_n$, so $dI_n-B_i^*B_i$ is positive semidefinite for any $i\in[d]$. Thus, the operator norm of $B_i$ is upper bounded by $\sqrt d$ for each $i\in[d]$. Moreover, the operator norm of any isometry is at most $1$, thus $\norm{T_{V^\perp}}_{S_\infty},\norm{T_V}_{S_\infty}\leq 1$. Using the submultiplicativity of the operator norm, we have  $\norm{T_{V^\perp}^*B_iT_V}_{S_\infty}\leq \sqrt d$. 

    Recall that $ \norm{B_i|_{V^{\perp},V}}_{S_2}^2 = \norm{T_{V^\perp}^*B_i T_V}_{S_2}^2$ is the sum of the squares of the singular values of $T_{V^\perp}^*B_i T_V$. As there are precisely $\rk(T_{V^\perp}^*B_i T_V)$ non-zero singular values, and each one is upper-bounded by $\norm{T_{V^\perp}^*B_iT_V}_{S_\infty}\leq \sqrt d$, we conclude that 
	\[
	\norm{B_i|_{V^\perp, V}}_{S_2}^2\leq \rk(T_{V^\perp}^*B_iT_V)\norm{T_{V^\perp}^*B_iT_V}_{S_\infty}^2\leq d\cdot \rk(B_i|_{V^\perp, V}),
	\]
	as claimed. This proves \cref{thmit:edge-comparison}.

    In order to prove \cref{thmit:unitary-edge-comparison}, note that 
    if $B_i$ is a unitary matrix, then $\norm{B_i}_{S_\infty}=1$. This implies that $\norm{T_{V^\perp}^* B_i T_V}_{S_\infty}^2 \leq 1$, so the argument above shows that $\norm{B_i|_{V^\perp, V}}_{S_2}^2\leq \rk(B_i|_{V^\perp, V})$, which yields \cref{thmit:unitary-edge-comparison}.
\end{proof}


\section{Connections between graphs and matrix spaces}\label{sec:connections}
In this section, we study the graphical matrix tuple $\vB_G$ associated to a $d$-regular graph $G$.
We begin by proving that $h(\vB_G)$ is in general different from $h(G)$, as stated in \cref{prop:hq-k2}.
\begin{proof}[Proof of \cref{prop:hq-k2}]
    We certainly have that $h(K_2)=1$. Note that $\vB_{K_2} = \left(\smat{0&\sqrt 2\\0&0}, \smat{0&0\\\sqrt 2&0}\right)$.

    Consider the subspace $V \leq \bC^2$ spanned by the vector $(\frac{1}{\sqrt 2},\frac{1}{\sqrt 2})$. The orthogonal projection onto $V$ is given by the matrix $P_V = \frac 12\smat{1&1\\ 1 & 1}$, and
    \[
        \Phi_{\vB_{K_2}} (P_V) = \frac 12 \left(
        \begin{bmatrix}
            0&\sqrt 2\\0&0
        \end{bmatrix}
        \begin{bmatrix}
            \frac 12 & \frac 12 \\ \frac 12&\frac 12
        \end{bmatrix}
        \begin{bmatrix}
            0&\sqrt 2\\0&0
        \end{bmatrix}^*
        +
        \begin{bmatrix}
            0&0\\\sqrt 2&0
        \end{bmatrix}
        \begin{bmatrix}
            \frac 12 & \frac 12 \\ \frac 12&\frac 12
        \end{bmatrix}
        \begin{bmatrix}
            0&0\\\sqrt 2&0
        \end{bmatrix}^*
        \right)
        =
        \begin{bmatrix}
            \frac 12 &0\\0&\frac 12
        \end{bmatrix}.
    \]
    Therefore,
    \[
        \langle I_2-P_V, \Phi_{\vB_{K_2}}(P_V)\rangle = \trace \left(
        \begin{bmatrix}
            \frac 12 & -\frac 12 \\ -\frac 12&\frac 12
        \end{bmatrix}^*
        \begin{bmatrix}
            \frac 12 &0\\0&\frac 12
        \end{bmatrix}
        \right)
        =\frac 12.
    \]
    Hence, as $h_Q(\vB_{K_2})$ is defined as a minimum over all one-dimensional subspaces, we find that $h_Q(\vB_{K_2}) \leq \langle I_2-P_V, \Phi_{\vB_{K_2}}(P_V)\rangle = \frac 12$, as claimed.
\end{proof}
\begin{remark}
It is not hard to show that in fact, $h_Q(\vB_{K_2})=\frac 12$. More generally, one can show that $h_Q(\vB_{K_n})\leq\frac 1n$, which is smaller than $h(K_n)$ for all $n\geq2$. 
\end{remark}
We now turn to the proof of \cref{thm:graphical-tuple-preserves}. As the proofs that $h_D(\vB_G)=h(G)$ and $\mu(\vB_G)=\mu(G)$ are disjoint, we separate the \cref{thm:graphical-tuple-preserves} into two statements, \cref{prop:graphical-edge,prop:graphical-vertex}.
Note that \cref{thm:graphical-tuple-preserves} holds over any field, so we will work with $\mathbb{F}$ instead of $\mathbb{C}$ in the rest of this section. 
\begin{proposition}\label{prop:graphical-edge}
    For any $d$-regular graph $G=([n],E)$, we have $h_D(\vB_G)=h(G)$.
\end{proposition}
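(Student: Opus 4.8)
The plan is to turn the rank sum in the definition of $h_D(\vB_G)$ into a combinatorial quantity attached to a pair of disjoint subsets of $[n]$, and then prove the two inequalities $h_D(\vB_G)\le h(G)$ (via coordinate subspaces) and $h_D(\vB_G)\ge h(G)$ (by ``rounding'' an arbitrary subspace to a vertex cut).

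First I would compute the rank of a single restricted basis matrix. Ranks are insensitive to the scalar $\sqrt n$, so fix an ordered pair $(a,b)$ with $\{a,b\}\in E$ and a subspace $V$; then, with $T,R$ as in \cref{rmk:def-annihilator}, the matrix $\E_{a,b}|_{V^\perp,V}=R\,e_a e_b^{\mathrm{t}}\,T=(Re_a)(T^{\mathrm{t}}e_b)^{\mathrm{t}}$ is an outer product, hence of rank at most $1$, and it has rank exactly $1$ iff $Re_a\ne 0$ and $T^{\mathrm{t}}e_b\ne 0$, i.e.\ (using $(V^\perp)^\perp=V$ in finite dimensions) iff $e_a\notin V$ and $e_b\notin V^\perp$. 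Writing $A=\{a:e_a\in V\}$ and $B=\{b:e_b\in V^\perp\}$ --- disjoint subsets of $[n]$ with $|A|\le\dim V$ and $|B|\le n-\dim V$ since $\dim V+\dim V^\perp=n$ --- and $C=[n]\setminus(A\cup B)$, a short case check over which of the sets $A$, $B$, $C$ contains each of the two endpoints of an edge yields
\[
	\sum_{(a,b):\{a,b\}\in E}\rk\!\bigl(\E_{a,b}|_{V^\perp,V}\bigr)=d\,|C|+e(A,B),
\]
where $e(A,B)$ is the number of edges of $G$ between $A$ and $B$. The key point is that the right-hand side depends only on the pair $(A,B)$, not on $V$ itself.

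For the upper bound I would plug in the coordinate subspace on a set $W$ attaining $h(G)$: then $A=W$, $B=[n]\setminus W$ and $C=\varnothing$, so the displayed sum is $e(W,[n]\setminus W)=|\partial W|$, giving $h_D(\vB_G)\le h(G)$ (the length of $\vB_G$ supplies the normalization $d$). For the lower bound I would exploit the identities $e(A,B)=\tfrac12(|\partial A|+|\partial B|-|\partial C|)$ and $|\partial C|=d\,|C|-2e(C)$ to rewrite
\[
	d\,|C|+e(A,B)=\tfrac12 d\,|C|+e(C)+\tfrac12|\partial A|+\tfrac12|\partial B|\ \ge\ \tfrac12 d\,|C|+\tfrac12|\partial A|+\tfrac12|\partial B|.
\]
Now $|A|\le\dim V\le n/2$ forces $|\partial A|\ge h(G)\,d\,|A|$, and applying the definition of $h(G)$ to the smaller of $B$ and $[n]\setminus B$ gives $|\partial B|\ge h(G)\,d\min(|B|,n-|B|)$; hence the right-hand side above is at least $\tfrac12 d\bigl(|C|+h(G)|A|+h(G)\min(|B|,n-|B|)\bigr)$. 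Since $|C|=n-|A|-|B|$, a short case split on whether $|B|\le n/2$ --- using $h(G)\le 1$, and in the remaining case $|B|\le n-\dim V$ --- shows this is at least $h(G)\,d\,\dim V$, and therefore $h_D(\vB_G)\ge h(G)$.

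I expect the one subtle point to be making the lower bound work uniformly: one cannot simply bound $\sum\rk\ge|\partial A|\ge h(G)\,d\,|A|$ and be done, because a subspace $V$ of dimension $r$ may contain strictly fewer than $r$ standard basis vectors, so $|A|<r$ and the ``obvious'' cut $A$ is too small. The symmetric rewriting above is precisely what lets one extract enough edge-boundary from all three of $A$, $B$ and $C$ at once, and the side-size bookkeeping $|A|\le\dim V$, $|B|\le n-\dim V$ forced by $\dim V+\dim V^\perp=n$ is what makes the final numerical inequality close. Everything works verbatim over an arbitrary field, using the annihilator as in \cref{rmk:def-annihilator}.
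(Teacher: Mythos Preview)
Your argument is correct and takes a genuinely different route from the paper's. Both start from the same rank-one computation (your condition ``$e_a\notin V$ and $e_b\notin V^\perp$'' is exactly the paper's ``$a\in\supp(V^\perp)$ and $b\in\supp(V)$''), but diverge at the lower bound.

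The paper rounds $V$ to a \emph{single} vertex set $W$ of size exactly $r=\dim V$: via a row-reduction/Schur-complement argument on a basis matrix $T_V$, it produces $W$ with (in your notation) $A\subseteq W\subseteq A\cup C$, and then checks directly that each edge of $\partial W$ contributes at least one ordered pair to the rank sum, so $\sum\rk\ge|\partial W|$. This is a pointwise rounding --- for every subspace there is a vertex set of the same size doing at least as well --- from which equality of the minima follows immediately.

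You instead compute the rank sum exactly as $d|C|+e(A,B)$ and then feed the expansion hypothesis back in, applying the definition of $h(G)$ to $A$ and to $B$ separately and absorbing the $C$-term via $h(G)\le 1$. This is purely combinatorial and sidesteps the linear-algebraic rounding entirely, at the cost of a short case split and a slightly weaker conclusion (you bound the ratio by $h(G)$ directly rather than exhibiting a specific comparison cut $W$). Incidentally, the existence of such a $W$ in the paper's argument is immediate from your bookkeeping $|A|\le r\le n-|B|=|A|+|C|$, so the Schur-complement machinery could also be replaced by a one-line counting argument.

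One minor quibble: the tuple $\vB_G$ has length $dn$, not $d$, so your parenthetical ``the length of $\vB_G$ supplies the normalization $d$'' is not literally right; the paper's own proof silently makes the same identification, so this is a shared normalization convention rather than a gap in your reasoning.
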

\begin{proof}
    For $v\in\F^n$, denote by $\supp(v)\subseteq[n]$ the set of indices of the non-zero coordinates of $v$. For $V\leq\F^{n}$, we let $\supp(V)=\cup_{v\in V}\supp(v)$. 
    Define $\ann$ as in~\cref{rmk:def-annihilator}. Now we claim the following.
    \begin{claim}\label{claim:rank-Eij}
        We have that
        \[
            \rk(\E_{i,j}|_{\ann,V}) = 
            \begin{cases}
                1&\text{if }i \in \supp(\ann)\text{ and } j \in \supp(V)\\
                0&\text{otherwise.}
            \end{cases}
        \]
    \end{claim}
    \begin{proof}
        Suppose $\dim(V)=r$ and $\dim(\ann)=n-r$. Let $T_V$ (resp.\ $T_{\ann}$) be an $n\times r$ (resp.\ $n\times (n-r)$) matrix whose columns form a basis of $V$ (resp.\ $\ann$). Denote by $v_{1},\dots,v_{n}\in \F^r$ and $v^{\prime}_{1},\dots,v^{\prime}_{n}\in\F^{n-r}$ the vectors corresponding to the rows of $T_V$ and $T_{\ann}$, respectively. Then for any $i,j\in[n]$,
        $$
        \E_{i,j}|_{\ann,V}=T_{\ann}^t\E_{i,j}T_V={v^{\prime}_{i}}v_{j}^{t}.
        $$ 
        Note that $\rk(\E_{i,j}|_{\ann,V})=1$ if and only if $\E_{i,j}|_{\ann,V}\neq0$ (and otherwise $\rk(\E_{i,j}|_{\ann,V})=0$). This, in turn, happens if and only if $v^{\prime}_{i}\neq0$ and $v_{j}\neq0$, which is equivalent to $i\in\supp(\ann)$ and $j\in\supp(V)$. 
    \end{proof}

    For any fixed $V\leq\F^n$ of dimension $1\leq r\leq n/2$, we shall construct a vertex subset $W\subseteq[n]$ of size $r$ such that 
    \[
        \sum_{\{i,j\}\in E}\rk(\E_{i,j}|_{\ann, V})\geq |\partial W|,
    \]
    where here and throughout the sum is over all \emph{ordered} pairs of vertices which are adjacent in $G$.
    We use the same notation of $T_V$ and $T_{\ann}$ as in \cref{claim:rank-Eij}. Let $T = \begin{bmatrix}T_V&T_{\ann}\end{bmatrix}\in\M(n,\F)$. 
    Since $T_V$ is an $n\times r$ matrix of rank $r$, we can find a permutation matrix $P\in\GL(n,\F)$ such that the first $r$ rows and columns of $PT_V$ form an invertible matrix. 
    We can extend any basis of $P(V)$ with $(n-r)$ linearly independent vectors to span $\F^n$. Specifically, the full basis can be represented as an invertible matrix $M\in\GL(n,\F)$ of which the first $r$ columns form $PT_V$. Now break $M$ into blocks:
    $$
    M=
    \begin{bmatrix}
        A&B\\
        C&D
    \end{bmatrix},
    $$
    where $A\in\M(r,\F)$, $B\in\M(r\times(n-r),\F)$, $C\in\M((n-r)\times r,\F)$ and $D\in\M(n-r,\F)$. Note that $PT_V=\smat{A\\C}$. It follows that $A$ is invertible. Similarly, break $M^{-1}$ into blocks:
    $$
    M^{-1}=
    \begin{bmatrix}
        A^{\prime}&B^{\prime}\\
        C^{\prime}&D^{\prime}
    \end{bmatrix},
    $$
    where $A^{\prime}\in\M(r,\F)$, $B^{\prime}\in\M(r\times(n-r),\F)$, $C^{\prime}\in\M((n-r)\times r,\F)$ and $D^{\prime}\in\M(n-r,\F)$. Since $D'$ is the Schur complement of $A$,
    $D^{\prime}$ is also invertible. Note that
    $$
    \begin{bmatrix}C^{\prime}&D^{\prime}\end{bmatrix}PT_V=\begin{bmatrix}C^{\prime}&D^{\prime}\end{bmatrix}\begin{bmatrix}A\\C\end{bmatrix}=0.
    $$
    Since $\rk(\begin{bmatrix}C^{\prime}&D^{\prime}\end{bmatrix}P)=n-r$, it follows that the rows of $\begin{bmatrix}C^{\prime}&D^{\prime}\end{bmatrix}P$ form a basis of $\ann$. So we let $T_{\ann}=(\begin{bmatrix}C^{\prime}&D^{\prime}\end{bmatrix}P)^t=P^{-1}\smat{{C^{\prime}}^t\\{D^{\prime}}^t}$ and thus,
    $$
    T=
    \begin{bmatrix}
        T_V & T_{\ann}
    \end{bmatrix}
    =P^{-1}
    \begin{bmatrix}
        A&{C^{\prime}}^t\\C&{D^{\prime}}^t
    \end{bmatrix}.
    $$
    Let $W=\{P^{-1}(i):~i\in[r]\}$. Then $W\subseteq \supp(V)$ and $[n]\setminus W=\{P^{-1}(i):~i\in[n]\setminus[r]\}\subseteq \supp(\ann)$. By~\cref{claim:rank-Eij}, $\rk(\E_{i,j}|_{\ann, V})=1$ if and only if $i\in\supp(\ann)$ and $j\in\supp(V)$. On the other hand, $\partial W=\{\{i,j\}\in E:~i\in W,~j\in[n]\setminus W\}$. Thus $\sum_{\{i,j\}\in E}\rk(\E_{i,j}|_{\ann, V})\geq |\partial W|$. 

    In short, for every subspace $V\leq \F^n$ of dimension $1 \leq r\leq n/2$, we can find a set $W$ of $r$ vertices such that 
    \begin{equation}\label{eq: dim edge exp greater than edge exp}
    	\frac{\sum_{\{i,j\}\in E}\rk(\E_{i,j}|_{\ann, V})}{d\cdot \dim(V)}\geq \frac{|\partial W|}{d|W|}.
    \end{equation}
    Ranging over all subspace $V$ of dimension at most $n/2$ implies that $h_D(\vB_G)\geq h(G)$. The reverse inequality $h(\vB_G)\leq h(G)$ follows by simply choosing $V$ to be the coordinate subspace $\langle e_i\rangle_{i \in W}$, for which it is clear that $\sum_{\{i,j\} \in E} \rk(\E_{i,j}|_{\ann,V})=\ab{\partial W}$.
\end{proof}

We now turn to vertex and dimension expansion.
The following proof is based on the ideas of \cite{DS09, DW10}:
\begin{proposition}\label{prop:graphical-vertex}
    For any $d$-regular graph $G=([n],E)$, we have $\mu(\vB_G)=\mu(G)$.
\end{proposition}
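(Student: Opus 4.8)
The plan is to prove the two inequalities $\mu(\vB_G)\le \mu(G)$ and $\mu(\vB_G)\ge\mu(G)$ separately, both resting on one explicit computation. Since multiplying a matrix tuple by nonzero scalars does not change dimension expansion, we may drop the $\sqrt n$ normalization and work with $\vB_G=(\E_{i,j}:\{i,j\}\in E)$, a tuple indexed by ordered adjacent pairs. As $\E_{i,j}v=v_je_i$, the matrix $\E_{i,j}$ contributes $e_i$ to $\vB_G(V)$ exactly when some $v\in V$ has $v_j\ne 0$, i.e.\ when $j\in\supp(V)$. Hence, writing $C_T\coloneqq\langle e_i:i\in T\rangle$ for the coordinate subspace on $T\subseteq[n]$ and $N(S)\coloneqq\{i\in[n]:\{i,j\}\in E\text{ for some }j\in S\}$ for the neighbourhood of $S$, we get the clean formula
\[
    \vB_G(V)=C_{N(\supp(V))}\qquad\text{for every subspace }V\le\F^n.
\]
So $\vB_G(V)$ is \emph{always} a coordinate subspace, determined by $\supp(V)$ alone.

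For the easy direction $\mu(\vB_G)\le\mu(G)$, I would simply restrict the minimum defining $\mu(\vB_G)$ in \eqref{eq: dimension expansion} to coordinate subspaces $V=C_W$ with $1\le|W|\le n/2$. Then $\supp(V)=W$, $\vB_G(V)=C_{N(W)}$, so $\dim(V+\vB_G(V))-\dim(V)=|W\cup N(W)|-|W|=|\partial_{out}(W)|$, and the ratio is $|\partial_{out}(W)|/|W|$. Taking the minimum over such $W$ recovers $\mu(G)$ exactly, so $\mu(\vB_G)\le\mu(G)$.

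For the reverse inequality I would follow \cite{DS09,DW10}. The naive attempt — apply the definition of $\mu(G)$ to $S=\supp(V)$ — fails because $\supp(V)$ may be far larger than $n/2$; this is the main obstacle, and it is resolved by passing to a \emph{pivot set}. Fix a basis $v_1,\dots,v_r$ of $V$ (with $r=\dim(V)$) in reduced row-echelon form, with pivot coordinates $W=\{p_1,\dots,p_r\}\subseteq\supp(V)$, so $1\le|W|=r=\dim(V)\le n/2$. The echelon structure gives the key lemma: since $(v_k)_{p_l}=\delta_{kl}$, the $p_l$-coordinate of $v=\sum_k c_kv_k$ is $c_l$, so the coordinate projection $\pi_W$ is injective on $V$, and it sends $V\cap C_T$ into the subspace of $\F^W$ supported on $W\cap T$; hence $\dim(V\cap C_T)\le |W\cap T|$ for every $T\subseteq[n]$. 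Now, because $W\subseteq\supp(V)$ we have $C_{N(W)}\subseteq C_{N(\supp(V))}=\vB_G(V)$, so
\[
    \dim(V+\vB_G(V))-\dim(V)\;\ge\;|N(W)|-\dim(V\cap C_{N(W)})\;\ge\;|N(W)|-|W\cap N(W)|\;=\;|\partial_{out}(W)|,
\]
where the first step expands $\dim(V+C_{N(W)})$ and the second applies the pivot lemma with $T=N(W)$. Since $1\le|W|\le n/2$, the right-hand side is at least $\mu(G)\,|W|=\mu(G)\dim(V)$. Ranging over all $V$ with $1\le\dim(V)\le n/2$ gives $\mu(\vB_G)\ge\mu(G)$, completing the proof; note the whole argument is field-independent, as claimed in the remark following \cref{thm:graphical-tuple-preserves}. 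The only delicate ingredient is the reduction to the pivot set together with the echelon injectivity bound $\dim(V\cap C_T)\le|W\cap T|$; the rest is bookkeeping.
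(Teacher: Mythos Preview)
Your proof is correct and follows the same underlying idea as the paper's (both extract from $V$ a set of size $\dim(V)$ via an echelon basis, in the spirit of \cite{DS09,DW10}), but the packaging is different. The paper never computes $\vB_G(V)$ explicitly; instead it introduces the map $\pi$ sending a vector to the index of its last nonzero coordinate, proves $\pi(V_1+V_2)\supseteq\pi(V_1)\cup\pi(V_2)$ and $\pi(\E_{i,j}(V))\supseteq f_{i,j}(\pi(V))$, and chains these to get $|\pi(V+\vB_G(V))|\ge|\pi(V)\cup\partial_{out}(\pi(V))|$, finishing with $|\pi(U)|=\dim(U)$. Your route is more structural: you first observe the exact identity $\vB_G(V)=C_{N(\supp(V))}$ (which the paper does not isolate), and then replace the $\pi$-tracking by the single pivot inequality $\dim(V\cap C_T)\le|W\cap T|$. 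The two arguments are equivalent in strength; yours is arguably a bit more transparent since it separates the computation of $\vB_G(V)$ from the dimension bookkeeping, while the paper's formulation via $\pi$ stays closer to the monotone-expander framework it cites.
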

\begin{proof}
    For a non-zero vector $v \in \mathbb{F}^n$, denote by $\pi(v) \in[n]$ the largest index of a non-zero coordinate of $v$. Similarly, let $\pi(V)=\{\pi(v) \mid v \in V \setminus \{0\}\}$. For a set $S\subseteq[n]$ and $i,j\in[n]$, we define $f_{i,j}(S)=\{i\}$ if $j\in S$ and $f_{i,j}(S)=\varnothing$ otherwise. For any $V\leq\F^{n}$, we claim that
    \begin{equation}\label{eq:1}
        \pi(\E_{i,j}(V))\supseteq f_{i,j}(\pi(V)),
    \end{equation}
    where equality holds if $V$ is a coordinate subspace. To see this, it suffices to consider the following two cases:
    \begin{itemize}
        \item If there exists $v\in V$ such that the $j$th coordinate of $v$ is non-zero, then $\E_{i,j}(V)=\langle e_i\rangle$, which implies $\pi(\E_{i,j}(V))=\{i\}\supseteq f_{i,j}(\pi(V))$. Moreover, if $V$ is a coordinate subspace, we can also conclude that $e_j\in V$ and thus $j\in \pi(V)$, which implies that $\pi(\E_{i,j}(V))=\{i\}= f_{i,j}(\pi(V))$.
        \item If there doesn't exist $v\in V$ such that the $j$th coordinate of $v$ is non-zero, then $\E_{i,j}(V)=\{0\}$ and $j\notin\pi(V)$, which implies $\pi(\E_{i,j}(V))=\varnothing$ and $f_{i,j}(\pi(V))=\varnothing$.
    \end{itemize}
    Furthermore, for any $V_1,V_2\leq \F^n$, we have that
    \begin{equation}\label{eq:2}
        \pi(V_1+ V_2)\supseteq\pi(V_1)\cup\pi(V_2),
    \end{equation}
    where equality holds if $V_1$ and $V_2$ are coordinate subspaces or if one of them only consists of the zero vector. Thus, for any subspace $V\leq\F^n$ of dimension $\leq n/2$, we see that
    \begin{align*}
        |\pi(V+\vB_G(V))|&\geq|\pi(V)\cup\pi(\vB_G(V))|\quad &[\text{by \eqref{eq:2}}]\\
        &=\left|\pi(V)\cup\pi\left(\sum_{\{i,j\}\in E}\E_{i,j}(V)\right)\right|\quad &\\
        &=\left|\pi(V)\cup\left(\bigcup_{\{i,j\}\in E}\pi(\E_{i,j}(V))\right)\right|&[\text{by \eqref{eq:2} and }\E_{i,j}(V) = \langle e_i\rangle\text{ or }\{0\}]\\
        &\geq\left|\pi(V)\cup\left(\bigcup_{\{i,j\}\in E}f_{i,j}(\pi(V))\right)\right|&[\text{by \eqref{eq:1}}].
    \end{align*}
     Observe that $|\pi(V)|=\dim(V)\leq n/2$, as we can always find a basis of $V$ with distinct last non-zero coordinates. Additionally, for each vertex $j\in\pi(V)$, we have that $\cup_{\{i,j\}\in E}f_{i,j}(j)$ is the set of neighbors of $j$ in $G$. It follows that
     $$
     \pi(V)\cup\left(\bigcup_{\{i,j\}\in E}f_{i,j}(\pi(V))\right)=\pi(V)\cup\left(\partial_{out}(\pi(V))\right).
     $$
     Therefore, by the definition of vertex expansion,
     $$
    |\pi(V+\vB_G(V))|\geq \left|\pi(V)\cup\left(\bigcup_{\{i,j\}\in E}f_{i,j}(\pi(V))\right)\right|=\left|\pi(V)\cup\left(\partial_{out}(\pi(V))\right)\right|\geq (1+\mu(G))\cdot|\pi(V)|.
     $$
     Therefore, we conclude that
     \[
     \mu(\vB_G)=\min_{\substack{V\leq \F^n \\1\leq\dim(V)\leq n/2}} \frac{\dim(V+\vB_G(V))-\dim(V)}{\dim(V)}\geq \mu(G).
     \]
     The reverse inequality follows by picking $V$ to be a coordinate subspace, which turns all the inequalities above into equalities.
\end{proof}
\begin{remark}
	The proof of \cref{prop:graphical-vertex} actually works for any graph, and \cref{prop:graphical-edge} also holds for any graph after removing the $d$-normalization from the definitions of edge expansion and dimension edge expansion.
\end{remark}

\section{Conclusion and open problems}\label{sec:conclusion}

Recall \eqref{eq: quantum edge expansion norm formulation}, which gives an equivalent definition of the quantum edge expansion of a doubly stochastic matrix tuple in terms of the Schatten-$2$ norm, namely
\[
    h_Q(\vB)=\min_{\substack{V \leq \bC^n\\1\leq\dim(V)\leq\frac{n}{2}}}\frac{\sum_{i=1}^d\norm{B_i|_{V^\perp,V}}_{S_2}^2}{d\cdot\dim(V)}.
\]
Given this formulation, the following definition is natural.
\begin{definition}
    Given $p \in [1,\infty)$ and a doubly stochastic matrix tuple $\vB=(B_1,\dots,B_d) \in \M(n,\bC)^d$, the \emph{Schatten-$p$ edge expansion} of $\vB$ is defined as
    \[
        h_{S_p}(\vB) \coloneqq \min_{\substack{V \leq \bC^n\\1\leq\dim(V)\leq\frac{n}{2}}}\frac{\sum_{i=1}^d\norm{B_i|_{V^\perp,V}}_{S_p}^p}{d\cdot\dim(V)}.
    \]
\end{definition}
The proof of \cref{thmit:edge-comparison} immediately shows that for any doubly stochastic matrix tuple $\vB \in \M(n,\bC)^d$ and for any $p \in [1,\infty)$, we have
\[
    h_{S_p}(\vB) \leq d^{\frac p2}\cdot h_D(\vB).
\]
In case $\vB$ is a unitary matrix tuple, we have the stronger inequality
\[
    h_{S_p}(\vB) \leq h_D(\vB).
\]
Indeed, to prove both of these, we simply recall that $\norm{B_i|_{V^\perp,V}}_{S_p}^p$ is the sum of the $p$th powers of the singular values of $B_i|_{V^\perp,V}$. There are $\rk(B_i|_{V^\perp,V})$ non-zero singular values, and each of them is upper-bounded by the operator norm of $B_i|_{V^\perp,V}$. This operator norm, in turn, is upper-bounded by $\sqrt d$, and by $1$ in case $B_i$ is unitary.

Therefore, for any $p \in [1,\infty)$, Schatten-$p$ edge expansion implies dimension edge expansion, and thus dimension expansion. On the other hand, one can modify the proof of \cref{thm:main-negative} to show that the converse does not hold for any $p \in [1,\infty)$. Indeed, if $\vU \in \U(n)^d$ is a unitary matrix tuple, then it is easy to see that $h_{S_p}(\vU^s) \to 0$ as $s \to 0$, since the tuple $\vU^s$ converges to the identity tuple $(I_n,\dots,I_n)$ as $s \to 0$. However, \cref{thm:exp-keeps-dim-exp} states that $\mu(\vU^s)\geq \mu(\vU)/d$ for all sufficiently small $s$, and thus $h_D(\vU^s)$ stays bounded away from zero as $s \to 0$.

Given this, it is very natural to ask whether the notions of Schatten-$p$ edge expansion are all equivalent.
\begin{question}
    Fix $p,q \in [1,\infty)$ and $d \in \N$. Do there exist increasing functions $f,g:\R_{\geq 0} \to \R_{\geq 0}$ such that
    \[
        f(h_{S_p}(\vB)) \leq h_{S_q}(\vB) \leq g(h_{S_p}(\vB))
    \]
    holds for all doubly stochastic matrix tuples $\vB \in \M(n,\bC)^d$?
\end{question}
If the answer is positive, this could be viewed as a linear-algebraic analogue of a theorem of Matou\v sek \cite{Matousek97}, who proved that a certain $L^p$ notion of graph expansion is equivalent to spectral expansion (i.e.\ the $L^2$ notion) for all $p \in [1,\infty)$.

\bibliographystyle{alpha}
\bibliography{references}

\end{document}